\newtheorem{theorem}{Theorem}[section]
\newtheorem{lemma}[theorem]{Lemma}
\theoremstyle{definition}
\newtheorem{definition}[theorem]{Definition}
\newtheorem{example}[theorem]{Example}
\newtheorem{proposition}[theorem]{Proposition}
\theoremstyle{remark}
\newtheorem{remark}[theorem]{Remark}
\newtheorem{corollary}[theorem]{Corollary}
\newtheorem{question}[theorem]{Question}
\numberwithin{equation}{section}
\begin{document}

\title[Spectral representation of unbounded  normal $\mathcal{AM}$- operators]{Spectral representation of  absolutely minimum attaining  unbounded  normal operators}

%    Information for first author
\author{S. H. Kulkarni}
%    Address of record for the research reported here
\address{Department of Mathematics, \\
IIT Palakkad, Ahalia Integrated Campus,\\
 Kozhippara, Palakkad, Kerala, India 678557.}
\email{shk@iitpkd.ac.in}
%    \thanks will become a 1st page footnote.
\thanks{}

%    Information for second author
\author{G. Ramesh}
\address{Department of Mathematics,\\ IIT Hyderabad, \\Telangana, India 502 284.}
\email{rameshg@math.iith.ac.in}
\thanks{}

%    General info
\subjclass{Primary 47A75,47A05 ; Secondary 47A10,47A15 }

\date{\today}

%\dedicatory{}

\keywords{closed operator, minimum attaining, absolutely minimum attaining operator, invariant subspace, bounded transform, essential spectrum.}

\begin{abstract}
Let $T:D(T)\rightarrow H_2$ be a densely defined closed operator with domain $D(T)\subset H_1$. We say $T$ to be absolutely minimum attaining if for every closed subspace $M$ of $H_1$, the restriction operator $T|_M:D(T)\cap M\rightarrow H_2$ attains its minimum modulus $m(T|_{M})$. That is, there exists $x \in D(T)\cap M$ with $\|x\|= 1$ and $\|T(x)\| = \inf \{\|T(m)\|: m \in D(T) \cap M: \|m\|=1\}$. In this article, we prove several characterizations of this class of operators and show that every operator  in this class  has a nontrivial hyperinvariant subspace.

We also prove a spectral theorem for  unbounded normal operators of this class. It turns out that every such operator has a compact resolvent.
\end{abstract}

\maketitle

\section{Introduction }

The class of absolutely minimum attaining unbounded operators was introduced in \cite{SHKGRAM1} where its basic properties and structure were described under some additional assumptions. On the other hand,
similar studies in the case of bounded absolutely minimum operators were carried out in \cite{GRS2,NBGR2}. There is a significant difference between the bounded absolutely minimum attaining operators and the unbounded ones. The results in the present article improve results from \cite{SHKGRAM1}. It is interesting to note that this class contains densely defined closed operators with finite-dimensional null space and a compact generalized inverse. The complete characterization, structure, spectral properties and hyperinvariant subspaces are studied in \cite{GRS2,NBGR2,NBGR4}. A class larger than the absolutely norm attaining is explored in \cite{GROH1,GROH2}. Hence in this article we exclusively study the class of absolutely minimum attaining operators in the unbounded setting. Since the methods of the bounded case do not work in this case, we adopt different methods for proving our results.

The class of minimum attaining unbounded operators has been recently studied in \cite{SHKGRAM1,shkgrminattaining}. It is proved that this class is dense in the class of densely defined closed operators with respect to the gap metric. This result can be compared with the Lindenstrauss theorem of norm attaining operators. Moreover, a quantitative version of the Lindenstrauss theorem for minimum attaining operators is discussed in \cite{NBGR3}.

In this article we prove that a densely defined closed operator is absolutely minimum attaining if and only if it has a finite-dimensional null space and the Moore-Penrose inverse (generalized inverse) of the operator is compact. Using this characterization we study some properties of this class of operators. In particular, we prove a spectral theorem for  absolutely minimum attaining unbounded normal operators. The Spectral Theorem for unbounded normal operators is already available in the literature,
for example, in  \cite{rud}. Since the class considered by us is a subclass of this class, we need to emphasize that the spectral theorem proved by us is of a different type
and should be compared with the spectral theorem for compact normal operators.  In the end, we show that every such  operator has a non-trivial hyperinvariant subspace.

 In the second section we provide basic results which will be used  throughout the article. In the third section, we prove main results of this paper.

\section{Preliminaries}
In this section we give details of  basic notations, definitions and results which we need to prove our main results.

Throughout we work  with infinite-dimensional complex Hilbert spaces denoted by $H, H_1,H_2$ etc. The inner product and the
induced norm on these spaces are denoted  by  $\langle \cdot\rangle$ and
$\|.\|$, respectively. If $T$ is a linear operator with domain $D(T)$, a subspace of $H_1$ and taking values in $H_2$, then $T$ is said to be densely defined if $D(T)$ is dense in $H_1$.  The  graph $G(T)$ of $T$ is defined by  $G(T):={\{(x,Tx):x\in D(T)}\}\subseteq H_1\oplus H_2$. We say $T$ to be a closed operator if $G(T)$ is closed.  Equivalently, we can say that  $T$ is closed if and only if  for every sequence $(x_n)$ in  $D(T)$  such that  $x_n\rightarrow x\in H_1$ and $Tx_n\rightarrow y\in H_2$, then $x\in D(T)$ and $Tx=y$.

%If $A:H_1\rightarrow H_2$ is linear such that there exists $k>0$ such that $\|Ax\|_2\leq k\|x\|_1$ for all $x\in H_1$, then $A$ is said to be a bounded operator. Here $\|\cdot\|_1$ and $\|\cdot\|_2$ are norms induced by the inner products on $H_1$ and $H_2$ respectively. If $A$ is bounded, then the quantity $\|A\|:=\sup{\{\|Ax\|_2: x\in H_1,\; \|x\|_1}\}$ is finite and is called the norm of $A$.

The closed graph Theorem \cite{rud} assert that  an everywhere defined
closed operator is bounded.  That is,  the domain of an unbounded closed operator must be a proper subspace of a Hilbert space.

The space of all bounded operators between $H_1$ and $H_2$ is denoted by $\mathcal B(H_1,H_2)$ and the class of all closed operators between $H_1$ and $H_2$ is denoted by $\mathcal C(H_1,H_2)$. We write $\mathcal B(H)$ for $\mathcal B(H,H)$ and $\mathcal C(H)$ for $\mathcal C(H,H)$. We denote the space of all compact operators between $H_1$ and $H_2$ by $\mathcal K(H_1,H_2)$ and $\mathcal K(H,H)$ by $\mathcal K(H)$. Let us denote by $\mathcal F(H_1,H_2)$, the space of all finite rank bounded operators from $H_1$ into $H_2$ and by $\mathcal F(H)$, the space of all finite rank bounded operators on $H$.

The unit sphere of a subspace $M$ of $H$ is defined by  $S_M:={\{x\in M:\|x\|=1}\}$. If $M$ is closed, then the othogonal projection of $H$ onto $M$ is denoted by $P_{M}$.

The null space and the range space of $T\in \mathcal C(H_1,H_2)$ are denoted by $N(T)$ and $R(T)$ respectively and the space $C(T):=D(T)\cap N(T)^\bot$ is called the carrier of $T$. In fact, $D(T)=N(T)\oplus^\bot C(T)$ \cite[page 340]{ben}.

If $S$ and $T$ are closed operators  with the property that $D(T)\subseteq D(S)$ and $Tx=Sx$ for all $x\in D(T)$, then $T$ is called the restriction of $S$ denoted by $T\subseteq S$, and $S$ is called an extension of $T$, which is denoted by $T\subseteq S$.  Furthermore, $S=T$ if and only if  $S\subseteq T$ and $T\subseteq S$.

If $T$ is a densely defined operator, then there exists a unique linear operator (in fact, a closed operator) $T^*:D(T^*)\rightarrow H_1$, with
\begin{equation*}
D(T^*):={\{y\in H_2: x\rightarrow \langle Tx,y\rangle \, \text{for all}\, x\in D(T)\,\text{is continuous}}\}\subseteq H_2
\end{equation*}
 satisfying $\langle Tx,y\rangle =\langle x,T^*y\rangle$ for all $x\in D(T)$ and $y\in D(T^*)$.

A densely defined operator $T\in \mathcal C(H)$ is said to be
\begin{enumerate}
\item normal if $T^*T=TT^*$
\item  self-adjoint if $T=T^*$
\item  positive if $\langle Tx,x\rangle \geq 0$ for all $x\in D(T)$
\item symmetric if $T\subseteq T^*$. In other words, $T$ is symmetric if $\langle Tx,y\rangle=\langle x,Ty\rangle$ for every $x,y\in D(T)$.
\end{enumerate}

If $T$ is positive, then there exists a unique positive operator $S$ such that $T=S^2$. The operator $S$ is called the square root of $T$ and it is denoted by $S=T^\frac{1}{2}$.

A bounded operator $V\in \mathcal B(H_1,H_2)$ is said to be a partial isometry if $\|Vx\|_2=\|x\|_1$ for all $x\in N(V)^{\bot}$. In this case $N(V)^{\bot}$ is called the initial space of $V$ and $R(V)$ is called the final space of $V$.

 If $T\in \mathcal C(H_1,H_2)$ is densely defined, then the  operator  $|T|:=(T^*T)^\frac{1}{2}$ is called the modulus of $T$. There exists a unique partial isometry $V:H_1\rightarrow H_2$ with initial space $\overline{R(T^*)}$ and the final space $\overline{R(T)}$ such that $T=V|T|$. This factorization of $T$ is called the polar factorization or the polar decomposition of $T$.

 It can be verified that $D(|T|)=D(T)$ and $N(|T|)=N(T)$ and $\overline{R(|T|)}=\overline{R(T^*)}$.

%\begin{thm}\cite[Theorem 2, page 184]{birmannsolomyak}

%\end{thm}
%\begin{defn}\cite[page 346]{rud}
 Let $T\in \mathcal C(H)$ be densely defined. The resolvent of $T$ is defined by
 \begin{equation*}
 \rho(T):={\{\lambda \in \mathbb C:  (T-\lambda I)^{-1}\in \mathcal B(H)}\}.
 \end{equation*}
The set $\sigma(T)=\mathbb C \setminus \rho(T)$ is called the spectrum of $T$. The set $$\sigma_p(T)={\{\lambda \in \mathbb C: T-\lambda I:D(T)\rightarrow H \; \text{is not one-to-one}}\}$$ is called the point spectrum of $T$.

%If $S,T\in \mathcal C(H_1,H_2)$, then $D(S+T)=D(S)\cap D(T)$ and $(S+T)(x)=Sx+Tx$ for all $x\in D(S)\cap D(T)$. If $R\in \mathcal C(H_2,H_3)$, then $RT$ is a linear operator with $D(RT)={\{x\in D(T): Tx\in D(R)}\}$ and $(RT)x=R(Tx)$ for all $x\in D(RT)$.

%Let $T\in \mathcal C(H_1,H_2)$ be densely defined. A subspace $D$ of $D(T)$ is called a core for $T$ if for any $x\in D(T)$, there exists a sequence $(x_n)\subset D$ such that $\displaystyle \lim_{n\rightarrow \infty}x_n=x$ and
%$\displaystyle \lim_{n\rightarrow \infty}Tx_n=Tx$. In other words, $D$ is dense in the graph norm, which is defined by $\|x\|:=\|x\|+\|Tx\|$ for all $x\in D(T)$. It is a well known fact that $D(T^*T)$ is a core for $T$ (see \cite[Proposition 3.18, page 47]{schmudgen} for details).

For a self-adjoint operator $T\in \mathcal C(H)$, the \textit{discrete spectrum} $\sigma_d(T)$ is defined as  the set of all isolated eigenvalues of $T$ with finite multiplicity. The set $\sigma_{ess}(T)=\sigma(T)\setminus \sigma_d(T)$ is called the \textit{essential spectrum} of $T$. For more details of this concept we refer \cite[Definition 8.3, page 178]{schmudgen}.

We refer \cite{taylorlay,goldberg,rud,birmannsolomyak,schmudgen} for the above basics of unbounded operators.

Here we recall the definition and properties of the Moore-Penrose inverse (or generalized inverse) of a densely defined closed operator that we need for our purpose.

If $T\in \mathcal C(H_1,H_2)$ is densely defined, then there exists
a unique densely defined operator $T^\dagger \in \mathcal
C(H_2,H_1)$ with domain $D(T^\dagger)=R(T)\oplus ^\bot R(T)^\bot$
and has the following properties:
\begin{enumerate}
\item $TT^\dagger y=P_{\overline{R(T)}}~y, ~\text{for all}~y\in D(T^\dagger)$

\item $T^\dagger Tx=P_{N(T)^\bot} ~x, ~\text{for all}~x\in D(T)$

\item $N(T^\dagger)=R(T)^\bot$.
\end{enumerate}
The  operator $T^\dagger$ is called the \textit{Moore-Penrose inverse} of $T$. An alternative  definition of $T^\dagger$ is given below.

For every $y\in D(T^\dagger)$, let $$L(y):=\Big\{x\in D(T):
\|Tx-y\|\leq \|Tu-y\|\quad \text{for all} \quad u\in D(T)\Big\}.$$
 Here any $u\in L(y)$ is called a \textit{least square solution} of the operator equation $Tx=y$. The vector  $x=T^\dagger y\in L(y),\, \text{and}\;\|T^\dagger y\|\leq \|x\|\quad \text{for all} \quad u\in L(y)$
 and it is called the  \textit{least square solution of minimal norm}.
 A different treatment of $T^\dagger$ is given in \cite[Pages 314, 318-320]{ben},
 where the authors call this as ``\textit{the Maximal Tseng generalized Inverse}".

 Next we define minimum attaining operators and the absolutely minimum attaining operators.

 \begin{definition}\cite{ben, goldberg}\label{minmmodulus}
 Let $T\in \mathcal C(H_1,H_2)$ be densely defined. Then
 \begin{equation*}
 m(T):=\inf{\{\|Tx\|: x\in S_{D(T)}}\}
 \end{equation*}
 is  called the minimum modulus of $T$. The operator $T$ is said to be bounded  below if and only if $m(T)>0$.
\end{definition}

\begin{remark}

If $T\in \mathcal C(H_1,H_2)$ is densely defined, then
\begin{itemize}
%\item[(a)] By definition, we have $m(T)\leq \gamma(T)$. Moreover, if $T$ is one-to-one, then $m(T)=\gamma(T)$ since $D(T)=C(T)$
\item[(i)] $m(T)>0$ if and only if $R(T)$ is closed and $T$ is one-to-one
\item[(ii)] Since $D(T)=D(|T|)$ and $\|Tx\|=\|T|x\|$ for all $x\in D(T)$, we can conclude that $m(T)=m(|T|)$.
    \end{itemize}
\end{remark}
 \begin{definition}
   Let $T\in \mathcal C(H_1,H_2)$ be densely defined. Then $T$ is said to be
\begin{enumerate}
\item \textit{minimum attaining} if there exists $x_0\in S_{D(T)}$ such that $\|Tx_0\|=m(T)=\inf{\{\|Tx\|:x\in S_{D(T)}}\}$
\item \textit{absolutely minimum attaining} if for every closed subspace $M$ of $H_1$, the operator $T|_{M}:M\cap D(T)\rightarrow H_2$ is minimum attaining.
\end{enumerate}
 \end{definition}
We denote the set of all densely defined closed absolutely minimum attaining operators between $H_1$ and $H_2$ by $\mathcal{AM}_{c}(H_1,H_2)$ and the set of all densely defined minimum attaining closed operators by $\mathcal M_{c}(H_1,H_2)$. We write $\mathcal{AM}_{c}(H,H)$ and $\mathcal M_{c}(H,H)$ by $\mathcal{AM}_{c}(H)$ and $\mathcal M_{c}(H)$, respectively.

In particular, if $T\in \mathcal B(H_1,H_2)$, then $T$ is called minimum attaining if there exists $x\in H_1$ with $\|x\|=1$ such that $\|Tx\|=m(T)$. We say $T$  to be absolutely minimum attaining if for every closed subspace $M$ of $H_1$, the restriction operator $T|_{M}:M\rightarrow H_2$ is minimum attaining. This class is denoted by $\mathcal{AM}(H_1,H_2)$. If $H_1=H_2=H$, then we denote $\mathcal{AM}(H_1,H_2)$ by $\mathcal{AM}(H)$.

 Let $M$ be a closed subspace of $H$ and $T\in \mathcal C(H)$ be densely defined. Then $M$ is said to be invariant under $T$,
if $T(M\cap D(T))\subseteq M$.

Let $P:=P_M$. If $P(D(T))\subseteq D(T)$ and $(I-P)(D(T))\subseteq D(T)$, then
\begin{equation*}
T= \left(
           \begin{array}{cc}
             T_{11} & T_{12} \\
             T_{21} & T_{22} \\
           \end{array}
         \right),
\end{equation*}
where $T_{ij}=P_iTP_j|M_j\; (i,j=1,2)$, $M_1=P(D(T))$ and $M_2=(I-P)(D(T))$. Here $P_1=P$ and $P_2=I-P$. It is known that $M$ is invariant under $T$ if and only if $T_{21}=0$. Also, $M$ reduces $T$ if and only if $T_{21}=0$ and $T_{12}=0$.

\section{Main Results}
In this section we prove our main results. First we recall a few basic results on absolutely minimum attaining and absolutely norm attaining operators, that we use frequently. Recall that $T\in \mathcal B(H_1,H_2)$ is norm attaining if there exists $x\in S_{H_1}$ such that $\|Tx\|=\|T\|$. We say $T$ to be absolutely norm attaining if for every closed subspace $M$ of $H_1$, the restriction operator $T|_{M}:M\rightarrow H_2$ is norm attaining. We denote the set of all absolutely norm attaining operators by $\mathcal{AN}(H_1,H_2)$. For more details of this class operators we refer to \cite{RAMpara}.
\begin{theorem}\cite[Theorem 5.14]{GRS2}
Let $T\in \mathcal B(H_1,H_2)$. Then the following are equivalent;
\begin{enumerate}
\item $T\in \mathcal{AM}(H_1,H_2)$
\item $T^*T\in \mathcal{AM}(H_1)$.
\end{enumerate}
\end{theorem}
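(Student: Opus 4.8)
The plan is to reduce the statement to a question about a single positive operator and then settle it by spectral structure. First I would use the polar decomposition $T=V|T|$ with $|T|=(T^*T)^{1/2}$ together with the identity $\|Tx\|=\||T|x\|$, which holds for every $x\in H_1$. Since $T$ is bounded, $D(T)=D(|T|)=H_1$, so for each closed subspace $M$ we have $m(T|_M)=m(|T||_M)$ and, more precisely, a unit vector $x_0\in M$ attains the minimum modulus of $T|_M$ if and only if it attains that of $|T||_M$. Hence $T\in\mathcal{AM}(H_1,H_2)$ if and only if $|T|\in\mathcal{AM}(H_1)$. Because $T^*T=|T|^2$, the theorem becomes the following assertion about the positive operator $A:=|T|$: one has $A\in\mathcal{AM}(H_1)$ if and only if $A^2\in\mathcal{AM}(H_1)$.

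For the positive case I would first record why a subspace-by-subspace transfer cannot work. For a closed subspace $M$ and $x\in M$ one has $\|Ax\|^2=\langle A^2x,x\rangle=\langle P_MA^2P_Mx,x\rangle$, so the minimizers of $\|Ax\|$ over $S_M$ are exactly the bottom eigenvectors of the compression $P_MA^2P_M|_M$, whereas those of $\|A^2x\|$ are the bottom eigenvectors of $P_MA^4P_M|_M$; these two compressions need not share a bottom eigenvector, as a direct $3\times 3$ computation already shows. Therefore the equivalence must be argued globally. The engine is the spectral characterisation of positive absolutely minimum attaining operators, which I would establish (or cite from the bounded theory): a positive $B\in\mathcal B(H)$ lies in $\mathcal{AM}(H)$ if and only if $\sigma_{ess}(B)$ is a single point $\{c\}$, the operator $B-cI$ is compact, and only finitely many spectral values of $B$ exceed $c$, while the eigenvalues of $B$ lying below $c$ may accumulate only at $c$. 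The necessity of this structure comes from applying the definition on the whole space and then on the orthogonal complements of successively chosen bottom eigenvectors: if $\sigma_{ess}(B)$ were not a single point, or if infinitely many eigenvalues lay above $\inf\sigma_{ess}(B)$, one could assemble a closed subspace on which $\inf\{\|Bx\|:x\in S_M\}$ equals $c$ but is not attained. For sufficiency one uses that $P_M(B-cI)P_M|_M$ is a compact self-adjoint operator on $M$ whose spectrum below $0$ is discrete, so the bottom of the numerical range of $B$ on $M$ is attained.

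With this characterisation available the equivalence is immediate from the functional calculus. The map $\phi(\lambda)=\lambda^2$ is a strictly increasing homeomorphism of $[0,\|A\|]$ onto $[0,\|A\|^2]$, and by the spectral mapping theorem it carries $\sigma(A)$ onto $\sigma(A^2)$ and $\sigma_{ess}(A)$ onto $\sigma_{ess}(A^2)$, sends isolated eigenvalues to isolated eigenvalues with the same eigenspaces, and preserves the order relation ``above/below $c$''. Consequently $A$ has $\sigma_{ess}(A)=\{c\}$ with $A-cI$ compact and only finitely many spectral values above $c$ if and only if $A^2$ has the analogous structure with the single essential value $c^2$. Applying the characterisation to both $A$ and $A^2$ then yields $A\in\mathcal{AM}(H_1)\iff A^2\in\mathcal{AM}(H_1)$, and reversing the reduction of the first paragraph gives the theorem. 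The main obstacle is the positive-operator characterisation itself, and within it the verification of attainment on subspaces not adapted to the eigenbasis of $A$; this is precisely the point where the compactness of $A-cI$ (rather than any pointwise comparison of $\|Ax\|$ with $\|A^2x\|$) does the work.
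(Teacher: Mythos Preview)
The paper does not prove this statement at all: it is quoted verbatim as \cite[Theorem 5.14]{GRS2} and used as a black box. So there is no in-paper argument to compare against, only the cited reference.

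Your strategy is sound and is, in spirit, how the result is obtained in the bounded theory: the reduction $T\in\mathcal{AM}\Leftrightarrow|T|\in\mathcal{AM}$ via $\|Tx\|=\||T|x\|$ is immediate, and the remaining equivalence $A\in\mathcal{AM}\Leftrightarrow A^2\in\mathcal{AM}$ for positive $A$ follows exactly as you say once one has the structural characterisation (equivalently \cite[Theorem 5.8]{GRS2} or the essential-spectrum formulation in \cite{NBGR2}), because $\lambda\mapsto\lambda^2$ is an order-preserving homeomorphism of $[0,\|A\|]$ that carries $\sigma_{ess}(A)$ to $\sigma_{ess}(A^2)$ and matches eigenspaces. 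That part of your write-up is correct and complete.

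One imprecision worth flagging: in your sketch of the \emph{sufficiency} half of the structural characterisation you argue that ``$P_M(B-cI)P_M|_M$ is compact self-adjoint, so the bottom of the numerical range of $B$ on $M$ is attained.'' That would show $\inf_{x\in S_M}\langle Bx,x\rangle$ is attained, but $\mathcal{AM}$ asks for the attainment of $\inf_{x\in S_M}\|Bx\|$, i.e.\ the bottom of the spectrum of $P_MB^2P_M|_M$, and you yourself point out two sentences earlier that these two compressions do not share minimisers in general. The fix is easy---work with $B^2-c^2I=(B-cI)(B+cI)$, which is again compact, and use the finite-rank ``above $c$'' condition to rule out the case where the infimum equals $c^2$ without being attained---but as written the sufficiency sketch is circular. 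Since you intend to cite the characterisation rather than reprove it, this does not damage the main argument; just be aware that the heuristic you gave is not quite the right one.
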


\begin{theorem}\cite[Theorem 4.6]{SHKGRAM1}\label{inverseAN}
Let $T\in \mathcal C(H)$ be densely defined and $T^{-1}\in \mathcal B(H)$. Then $T\in \mathcal{AM}_{c}(H)$ if and only if $T^{-1}\in \mathcal{AN}(H)$.
\end{theorem}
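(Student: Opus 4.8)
The plan is to set up an exact duality between the minimum modulus of restrictions of $T$ and the operator norm of restrictions of $S:=T^{-1}\in\mathcal B(H)$. Since $T\colon D(T)\to H$ is then a bijection and $\|x\|=\|STx\|\le\|S\|\,\|Tx\|$ for every $x\in D(T)$, the operator $T$ is bounded below with $m(T)=1/\|S\|>0$. First I would fix a closed subspace $M\subseteq H$ and set $N:=T(M\cap D(T))$, and check that $N$ is closed: if $Tx_n\to y$ with $x_n\in M\cap D(T)$, then $\|x_n-x_m\|\le m(T)^{-1}\|Tx_n-Tx_m\|$ shows $(x_n)$ is Cauchy, so $x_n\to x\in M$, and closedness of $T$ gives $x\in D(T)$, $Tx=y$, whence $y\in N$. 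Writing $x=Sy$ for $x\in M\cap D(T)$, a direct computation then yields
\begin{equation*}
m(T|_M)=\inf_{0\ne y\in N}\frac{\|y\|}{\|Sy\|}=\Big(\sup_{0\ne y\in N}\frac{\|Sy\|}{\|y\|}\Big)^{-1}=\frac{1}{\|S|_N\|}.
\end{equation*}
Moreover $T|_M$ attains its minimum modulus at a unit vector $x_0$ if and only if $S|_N$ attains its norm at $y_0:=Tx_0/\|Tx_0\|$, the inverse assignment being $x_0=Sy_0/\|Sy_0\|$. This attainment equivalence is the engine of the argument.

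For the implication $S\in\mathcal{AN}(H)\Rightarrow T\in\mathcal{AM}_{c}(H)$ the duality suffices: given any closed $M\subseteq H$, the subspace $N=T(M\cap D(T))$ is closed, so $S|_N$ attains its norm by hypothesis, and hence $T|_M$ attains its minimum modulus. As $M$ is arbitrary, $T\in\mathcal{AM}_{c}(H)$.

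The reverse implication will be the hard part, because the assignment $M\mapsto N=T(M\cap D(T))$ is \emph{not} surjective onto the closed subspaces of $H$: an arbitrary closed $N$ has the form $T(M\cap D(T))$ only when $\overline{S(N)}\cap D(T)=S(N)$, and this can fail, since the $H$-closure of $S(N)$ may be strictly larger than the graph-closed subspace $S(N)$. Thus $\mathcal{AM}_{c}(H)$ a priori yields norm attainment of $S$ only on a restricted family of range subspaces, and one must bridge the gap to all closed $N$. To do so I would first reduce to a positive operator via the polar decomposition $T=V|T|$, where $V$ is unitary (as $T$ is bijective, the initial and final spaces of $V$ are all of $H$); then $|S|=V|T|^{-1}V^{*}$, and since membership in both classes depends only on the modulus and is invariant under composition with unitaries ($\|Tx\|=\||T|x\|$ and $\|Sy\|=\||S|y\|$), the statement reduces to the claim that for the positive operator $A:=|T|$ with bounded positive inverse, $A\in\mathcal{AM}_{c}(H)\iff A^{-1}\in\mathcal{AN}(H)$. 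For this positive case the route I would take is to show that $A\in\mathcal{AM}_{c}(H)$ forces $A^{-1}$ to be compact, after which one invokes the elementary fact that every compact operator lies in $\mathcal{AN}(H)$ (a compact operator attains its norm on every closed subspace, by weak compactness of the unit ball).

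I expect the main obstacle to be precisely this compactness step: extracting from attainment of $m(A|_M)$ on all $H$-closed subspaces $M$ — a condition which, through the duality above, only directly controls the restricted family of range subspaces — the global conclusion that $\sigma_{ess}(A^{-1})=\{0\}$, equivalently that $A^{-1}$ is compact. The natural strategy is contrapositive: if $A^{-1}$ had a nonzero essential spectral value, one would use an associated orthonormal sequence of approximate eigenvectors to manufacture a closed subspace on which the norm of $A^{-1}$ (hence the minimum modulus of $A$) is approached but not attained, contradicting $A\in\mathcal{AM}_{c}(H)$. Carrying this out carefully, so that the constructed subspace genuinely lies within the reach of the $\mathcal{AM}_{c}$ hypothesis, is where the real work lies.
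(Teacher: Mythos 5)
Your first half is correct and complete: the closedness of $N=T(M\cap D(T))$, the duality $m(T|_M)=1/\|S|_N\|$, the attainment correspondence, and hence the implication $T^{-1}\in\mathcal{AN}(H)\Rightarrow T\in\mathcal{AM}_{c}(H)$ all check out, as does the reduction of the converse to the positive operator $A=|T|$ via the polar decomposition (where $V$ is indeed unitary). Bear in mind that the paper offers no proof to compare against --- the result is imported verbatim from \cite[Theorem 4.6]{SHKGRAM1} --- so your attempt must stand as a self-contained proof, and it does not: the converse direction is only a plan. The assertion that $A\in\mathcal{AM}_{c}(H)$ forces $A^{-1}\in\mathcal{K}(H)$ is not a deferrable step; it is the entire content of the theorem in this direction (in the present paper that statement is Theorem \ref{mpicharacterization}, which is itself \emph{deduced from} Theorem \ref{inverseAN} via Proposition \ref{compactmpi}, so your plan reverses that logical order and must prove compactness from scratch, essentially at the level of the structure Theorem \ref{posAMstructure}). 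Executing your contrapositive sketch needs the spectral theorem for unbounded self-adjoint operators plus a nontrivial construction: if $A^{-1}$ is not compact, then $\sigma(A)$ has a finite accumulation point $c>0$ or an eigenvalue of infinite multiplicity at $c$; when the spectrum accumulates at $c$ from below, orthonormal spectral vectors $w_n$ with $\|Aw_n\|\uparrow c$ span a subspace on which the minimum \emph{is} attained (at $w_1$), so no contradiction arises --- one must mix in spectral vectors $v_n$ attached to points $\lambda_n\to\infty$ and take $M=\overline{\operatorname{span}}\{w_n+\epsilon_n v_n\}$ with $\epsilon_n$ calibrated (e.g.\ $\epsilon_n^2(\lambda_n^2-c^2)>c^2-\|Aw_n\|^2$, $\epsilon_n\lambda_n\to 0$) so that the Rayleigh quotients decrease strictly to $c^2$ and $m(A|_M)=c$ is never attained. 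None of this appears in your proposal. (Your stated worry, that the constructed subspace might not be ``within the reach'' of the hypothesis, is vacuous: $\mathcal{AM}_{c}$ quantifies over \emph{all} closed subspaces of $H$; the difficulty is making the infimum unattained, not reaching the subspace.)

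There is also an outright error in the plan: compactness of $A^{-1}$ is false when $T$ is bounded, a case the hypotheses ``$T\in\mathcal{C}(H)$ densely defined, $T^{-1}\in\mathcal{B}(H)$'' do not exclude. For $T=I$ one has $I\in\mathcal{AM}(H)$ and $I^{-1}=I\in\mathcal{AN}(H)$, yet $I$ is not compact; more generally, positive bounded $\mathcal{AM}$-operators have the form $\alpha I-K+F$ and their inverses need not be compact. Unboundedness of $T$ is exactly what supplies the spectral points $\lambda_n\to\infty$ in the construction above, and your argument nowhere invokes it. A complete proof must therefore either restrict to unbounded $T$ explicitly, or split off the bounded case and settle it by the bounded theory (e.g.\ \cite[Theorem 5.14]{GRS2} and the spectral characterizations of bounded $\mathcal{AM}$- and $\mathcal{AN}$-operators), where compactness is not the operative mechanism.
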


Let $T\in \mathcal C(H)$ be a densely defined closed operator. Then $Z_{T}=T(I+T^*T)^{-\frac{1}{2}}$ is called the \textit{bounded transform} of $T$. Here we list a few important properties of $Z_{T}$.
\begin{theorem}\cite[Page 90]{schmudgen}
  Let $T\in \mathcal C(H)$ be densely defined. Then we have
  \begin{enumerate}
    \item $\|Z_T\|\leq 1$.     \item $(Z_{T})^*=Z_{T^*}$
    \item $Z_{T}^*Z_{T}=I-(I+T^*T)^{-1}$.
  \end{enumerate}
\end{theorem}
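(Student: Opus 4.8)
The plan is to reduce everything to the spectral calculus of the nonnegative self-adjoint operator $A:=T^*T$ together with the polar decomposition $T=V|T|$. By von Neumann's theorem $A$ is self-adjoint and nonnegative and $C:=(I+A)^{-1}\in\mathcal B(H)$ is a positive contraction; its square root $C^{1/2}=(I+A)^{-1/2}$ is therefore also a bounded positive operator, available through the functional calculus of $A$. Since the range of $C^{1/2}$ equals $D(A^{1/2})=D(|T|)=D(T)$, the operator $Z_T=TC^{1/2}$ is everywhere defined. Writing $|T|=A^{1/2}$ and setting $g(\lambda):=\bigl(\lambda/(1+\lambda)\bigr)^{1/2}$ (so that $0\le g\le 1$), I would record the factorisation $Z_T=V|T|\,C^{1/2}=V\,g(A)$, in which $g(A)$ is bounded, positive and self-adjoint with $\|g(A)\|\le 1$.

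For (3) I would compute $Z_T^*Z_T=g(A)V^*V\,g(A)$. Here $V^*V=P_{N(T)^\perp}=P_{N(A)^\perp}$, while $g$ vanishes at $0$, so $R(g(A))\subseteq N(A)^\perp$ and hence $V^*V\,g(A)=g(A)$. This gives $Z_T^*Z_T=g(A)^2=A(I+A)^{-1}=I-(I+A)^{-1}=I-C$, which is exactly (3). Statement (1) then drops out immediately: since $C\ge 0$ we have $0\le Z_T^*Z_T=I-C\le I$, so $\|Z_T\|^2=\|Z_T^*Z_T\|\le 1$.

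The substantive point is (2). Using $T^*=V^*B^{1/2}$ with $B:=TT^*$ (the polar decomposition of $T^*$ has partial isometry $V^*$ and modulus $|T^*|=B^{1/2}$), the same computation gives $Z_{T^*}=V^*g(B)$, while $Z_T^*=g(A)V^*$; thus (2) amounts to the intertwining identity $Vg(A)=g(B)V$. The hard part will be establishing this for the \emph{unbounded} operators $A$ and $B$, where one must watch domains. I would start from the two-sided polar decomposition $T=V|T|=|T^*|V$, i.e. $VA^{1/2}=B^{1/2}V$, first check that $V$ carries $D(A^{1/2})$ into $D(B^{1/2})$ and upgrade the identity on $D(A)$ to $VA\subseteq BV$, deduce $V(I+A)=(I+B)V$ on $D(A)$, and invert to obtain the bounded intertwining of resolvents $VC=C_BV$, where $C_B:=(I+B)^{-1}$.

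Once the resolvents intertwine, the rest is soft: by induction $VC^n=C_B^nV$, so $Vp(C)=p(C_B)V$ for every polynomial $p$, and since $\sigma(C),\sigma(C_B)\subseteq[0,1]$, a uniform (Stone--Weierstrass) approximation promotes this to $V\psi(C)=\psi(C_B)V$ for all continuous $\psi$ on $[0,1]$. Taking $\psi(s)=(1-s)^{1/2}$ gives $\psi(C)=(I-C)^{1/2}=g(A)$ and $\psi(C_B)=g(B)$, hence $Vg(A)=g(B)V$; passing to adjoints yields $g(A)V^*=V^*g(B)$, that is $Z_T^*=Z_{T^*}$, which is (2). I expect the only delicate bookkeeping to be the domain checks in the passage $VA^{1/2}=B^{1/2}V\Rightarrow VA\subseteq BV\Rightarrow VC=C_BV$; everything after that is continuous functional calculus on bounded operators.
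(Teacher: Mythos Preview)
The paper does not actually prove this theorem: it is quoted verbatim from Schm\"udgen's book and carries no proof in the article, so there is nothing on the paper's side to compare your argument against. That said, your outline is correct and is essentially the standard route one finds in the literature. The factorisation $Z_T=V\,g(A)$ with $A=T^*T$ and $g(\lambda)=(\lambda/(1+\lambda))^{1/2}$ is legitimate because $R((I+A)^{-1/2})=D(A^{1/2})=D(T)$, and your computation of (3) via $V^*V\,g(A)=g(A)$ (using $N(T)=N(A)$ and $g(0)=0$) is clean; (1) then follows at once. For (2), the key intertwining $VC=C_BV$ with $C=(I+A)^{-1}$, $C_B=(I+B)^{-1}$ indeed follows from $T=|T^*|V$ after the domain checks you flag: from $B^{1/2}V=VA^{1/2}$ on $D(A^{1/2})$ one gets, for $x\in D(A)$, that $Vx\in D(B)$ and $BVx=VAx$, hence $V(I+A)^{-1}=(I+B)^{-1}V$ on all of $H$ since $I+A$ is surjective. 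The passage to $V\psi(C)=\psi(C_B)V$ for continuous $\psi$ on $[0,1]$ by polynomial approximation is then routine. In short, your proposal is a sound self-contained proof; the paper simply defers to the reference.
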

It is to be noted that in particular, $T\in \mathcal B(H)$ if and only $\|Z_{T}\|<1$ (see \cite[Corollary 2.1]{Sharifigap}). Also we can easily prove that $N(Z_T)=N(T)$ and $R(T)=R(Z_T)$ (see \cite{rameshthesis}).

%\begin{theorem}
%  Let $T\in \mathcal B(H)$ be positive. Then $T\in \mathcal{AN}(H)$ if and only if there exists a positive compact operator $K$, finite rank self-adjoint operator $F$ and $\alpha \geq 0$ such that $T=\alpha I=K-F$.
%\end{theorem}

\begin{theorem}\cite[theorem VIII.3]{reedsimon1}
  Let $S$ be a densely defined closed symmetric operator. Then
  \begin{equation}\label{symopproperty}
    \|(S+iI)x\|^2=\|Sx\|^2+\|x\|^2,\; \text{for all}\; x\in D(S).
  \end{equation}
\end{theorem}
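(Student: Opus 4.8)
The plan is to expand $\|(S+iI)x\|^2$ directly through the inner product and use the symmetry of $S$ to eliminate the cross terms. Fix $x\in D(S)$. Since $ix\in D(S)$ as well (a subspace is closed under scalar multiplication), writing $(S+iI)x=Sx+ix$ and expanding gives
\begin{equation*}
\|(S+iI)x\|^2=\langle Sx+ix,\,Sx+ix\rangle=\|Sx\|^2+\langle Sx,ix\rangle+\langle ix,Sx\rangle+\|ix\|^2.
\end{equation*}
Here $\|ix\|^2=|i|^2\|x\|^2=\|x\|^2$, so the first and last terms already produce the desired right-hand side of \eqref{symopproperty}, and the task reduces to showing that the two middle (cross) terms cancel.

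For the cross terms I would pull out the scalar $i$ using (conjugate-)linearity of the inner product, obtaining $\langle Sx,ix\rangle=-i\langle Sx,x\rangle$ and $\langle ix,Sx\rangle=i\langle x,Sx\rangle$. The crucial input is that $S$ is symmetric, i.e.\ $S\subseteq S^*$, which yields $\langle Sx,x\rangle=\langle x,Sx\rangle$ for every $x\in D(S)$; in particular $\langle Sx,x\rangle$ is real. Substituting, the two cross terms become $-i\langle Sx,x\rangle+i\langle Sx,x\rangle=0$, and the identity \eqref{symopproperty} follows at once.

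I expect no genuine obstacle: the statement is a one-line consequence of the definition of symmetry together with the expansion of the squared norm. The only point requiring care is the bookkeeping of the complex scalar $i$ under the chosen inner-product convention (linear in the first slot, conjugate-linear in the second), which is precisely what makes the cross terms cancel rather than reinforce. It is worth emphasizing that symmetry, and not full self-adjointness, suffices here, because we only ever evaluate $\langle Sx,x\rangle$ for $x$ in the common domain $D(S)\subseteq D(S^*)$; the same computation applies verbatim to $S-iI$ and will be used later to control the operators $S\pm iI$.
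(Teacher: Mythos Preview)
Your proof is correct and is the standard direct computation: expand the inner product, use symmetry to see that $\langle Sx,x\rangle$ is real, and observe that the cross terms cancel. The paper itself does not supply a proof of this statement but simply cites \cite[Theorem~VIII.3]{reedsimon1}, where exactly this expansion is carried out; so your argument matches the intended one.
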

\begin{theorem}\label{complexperturb}
Let $S$ be a densely defined closed symmetric operator. Then the following are true;
\begin{enumerate}
  \item \label{minmodformula} $m(S+iI)=\sqrt{1+m(S)^2}$
  \item \label{minattainingreln} $S\in \mathcal M_{c}(H)$ if and only if $S+iI\in \mathcal M_{c}(H)$
    \item \label{AMrelation}$S\in \mathcal {AM}_{c}(H)$ if and only if $S+iI\in \mathcal {AM}_{c}(H)$.
\end{enumerate}
\end{theorem}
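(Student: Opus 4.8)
The plan is to run all three parts off the single Pythagorean identity \eqref{symopproperty}, which holds for \emph{every} $x\in D(S)$; since $iI$ is bounded we have $D(S+iI)=D(S)$, and for a unit vector $x\in S_{D(S)}$ the identity reads $\|(S+iI)x\|^2=\|Sx\|^2+1$. For part \ref{minmodformula} I would take the infimum over $x\in S_{D(S)}$ on both sides. Because squaring is monotone on $[0,\infty)$ and the additive constant $1$ pulls out of the infimum,
\[
m(S+iI)^2=\inf_{x\in S_{D(S)}}\bigl(\|Sx\|^2+1\bigr)=\Bigl(\inf_{x\in S_{D(S)}}\|Sx\|\Bigr)^2+1=m(S)^2+1,
\]
which is exactly the claimed formula.

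For part \ref{minattainingreln}, first note that $S+iI$ is densely defined and closed (closedness of $S$ together with boundedness of $iI$), so it is a legitimate member of $\mathcal C(H)$ to which the definition of $\mathcal M_c(H)$ applies. The equivalence is then entirely pointwise: a unit vector $x_0\in D(S)$ satisfies $\|Sx_0\|=m(S)$ if and only if $\|(S+iI)x_0\|^2=\|Sx_0\|^2+1=m(S)^2+1=m(S+iI)^2$, i.e. if and only if $x_0$ attains the minimum modulus of $S+iI$. Thus a minimiser for one operator is precisely a minimiser for the other, and both implications follow simultaneously.

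For part \ref{AMrelation}, the key observation is that identity \eqref{symopproperty} survives restriction: for any closed subspace $M$ and any $x\in M\cap D(S)\subseteq D(S)$ it still gives $\|(S+iI)x\|^2=\|Sx\|^2+\|x\|^2$. Since $(S+iI)|_M$ and $S|_M$ agree with $S+iI$ and $S$ on $M\cap D(S)$, this is precisely the restricted identity $\|(S+iI)|_M\,x\|^2=\|S|_M\,x\|^2+\|x\|^2$. Repeating the infimum-and-minimiser argument of the first two parts, with $D(S)$ replaced by $M\cap D(S)$, shows that $S|_M$ is minimum attaining if and only if $(S+iI)|_M$ is, for each fixed $M$. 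Quantifying over all closed subspaces $M$ then yields $S\in\mathcal{AM}_c(H)$ if and only if $S+iI\in\mathcal{AM}_c(H)$.

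The one point I would flag as the only genuine subtlety is that $S|_M$ is \emph{not} in general a symmetric operator (it maps $M\cap D(S)$ into $H$, not into $M$), so one cannot simply apply parts \ref{minmodformula}--\ref{minattainingreln} to ``$S|_M$'' as though it were symmetric. The correct move is to invoke \eqref{symopproperty} for the ambient operator $S$ and merely restrict the vectors $x$ to $M\cap D(S)$; it is symmetry of $S$ on all of $D(S)$ that makes the cross term $2\,\mathrm{Im}\langle Sx,x\rangle$ vanish, and this is unaffected by passing to a subspace. A minor bookkeeping matter is the degenerate case $M\cap D(S)=\{0\}$, where the minimum-attaining condition is vacuous so both sides of the equivalence hold trivially.
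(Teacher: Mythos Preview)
Your proof is correct and follows exactly the approach indicated in the paper: the paper's proof is a single sentence stating that all three parts follow directly from Equation~\eqref{symopproperty} together with the definitions, and your argument is precisely a careful unpacking of that sentence. Your remarks on why the restriction $S|_M$ need not be symmetric but the ambient identity~\eqref{symopproperty} still applies vectorwise, and on the degenerate case $M\cap D(S)=\{0\}$, are valid refinements that the paper leaves implicit.
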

\begin{proof}
All the proofs  directly follow by Equation \ref{symopproperty} and the definitions of the minimum modulus, minimum attaining property and the absolutely minimum attaining property, respectively.
  \end{proof}

\begin{theorem}\label{equivalentAMs}
  Let $T$ be a positive  and unbounded operator. Then the following are equivalent;
  \begin{enumerate}
    \item\label{positiveAM} $T\in \mathcal{AM}_{c}(H)$
    \item\label{AMplusidentity} $T^2+I\in \mathcal{AM}_{c}(H)$
    \item \label{boundedtransformAM} $Z_{T}\in \mathcal{AM}(H)$.
  \end{enumerate}
\end{theorem}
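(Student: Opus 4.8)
The plan is to reduce each of the three conditions to a statement about \emph{bounded} operators that is covered by the results already quoted, exploiting the algebraic identities that tie $T$, $T^{2}+I$ and $Z_{T}$ together. Throughout I would use that a positive operator is self-adjoint, so $T^{*}=T$, $T^{*}T=T^{2}$, and $Z_{T}=T(I+T^{2})^{-\frac12}$ is a positive contraction with $Z_{T}^{*}Z_{T}=I-(I+T^{2})^{-1}$.

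First I would translate all three conditions into the language of the Moore--Penrose/inverse picture. Since $T$ is symmetric, part (\ref{AMrelation}) of Theorem \ref{complexperturb} shows that (\ref{positiveAM}) is equivalent to $T+iI\in\mathcal{AM}_{c}(H)$. By Equation \eqref{symopproperty}, $\|(T+iI)x\|^{2}=\|Tx\|^{2}+\|x\|^{2}\ge\|x\|^{2}$, so $T+iI$ is bounded below; since $T$ is self-adjoint, $T+iI$ maps $D(T)$ bijectively onto $H$, hence $(T+iI)^{-1}\in\mathcal{B}(H)$, and Theorem \ref{inverseAN} rewrites (\ref{positiveAM}) as $(T+iI)^{-1}\in\mathcal{AN}(H)$. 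Applying the same reasoning to the positive invertible operator $T^{2}+I$ (its inverse is bounded, being the operator $(I+T^{2})^{-1}$ that already figures in the bounded transform) rewrites (\ref{AMplusidentity}) as $(T^{2}+I)^{-1}\in\mathcal{AN}(H)$. Finally, as $Z_{T}$ is bounded, \cite[Theorem 5.14]{GRS2} rewrites (\ref{boundedtransformAM}) as $Z_{T}^{*}Z_{T}=I-(T^{2}+I)^{-1}\in\mathcal{AM}(H)$.

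At this stage the three conditions read $(T+iI)^{-1}\in\mathcal{AN}(H)$, $(T^{2}+I)^{-1}\in\mathcal{AN}(H)$, and $I-(T^{2}+I)^{-1}\in\mathcal{AM}(H)$, and everything lives among bounded operators. To link the first two I would use $\big((T+iI)^{-1}\big)^{*}(T+iI)^{-1}=(T-iI)^{-1}(T+iI)^{-1}=(T^{2}+I)^{-1}$, obtained from $(T+iI)(T-iI)=T^{2}+I$ and $\big((T+iI)^{-1}\big)^{*}=(T-iI)^{-1}$, together with the bounded fact that $B\in\mathcal{AN}(H)$ if and only if $B^{*}B\in\mathcal{AN}(H)$; this gives the equivalence of (\ref{positiveAM}) and (\ref{AMplusidentity}). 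To link the last two I would set $A:=(T^{2}+I)^{-1}$, a positive contraction with $0\le A\le I$, and invoke the complementation duality $A\in\mathcal{AN}(H)\iff I-A\in\mathcal{AM}(H)$, which matches (\ref{AMplusidentity}) with (\ref{boundedtransformAM}) and closes the circle.

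The bookkeeping with these identities, and the checks that $T+iI$ and $T^{2}+I$ have bounded inverses, are routine. The genuine weight sits in the two bounded-operator facts used in the previous paragraph: the multiplicativity-type statement $B\in\mathcal{AN}\iff B^{*}B\in\mathcal{AN}$ (the $\mathcal{AN}$ analogue of \cite[Theorem 5.14]{GRS2}), and especially the duality $A\in\mathcal{AN}\iff I-A\in\mathcal{AM}$ for $0\le A\le I$. Neither follows formally from the minimum-modulus identities quoted so far, because $\|Ax\|$ and $\|(I-A)x\|$ are controlled by $A^{2}$ and $(I-A)^{2}$ rather than by the form $\langle Ax,x\rangle$; I therefore expect them to rest on the structure theorems for bounded absolutely norm/minimum attaining positive operators from \cite{GRS2,RAMpara} (a single point of essential spectrum at the relevant end, with finitely many eigenvalues beyond it), and that is where the real obstacle lies. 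A self-contained alternative would run the same reduction through the spectral theorem for the positive self-adjoint $T$, transporting the minimum-attaining problem on each closed subspace along the increasing homeomorphisms $\lambda\mapsto\lambda^{2}+1$ and $\lambda\mapsto\lambda(1+\lambda^{2})^{-\frac12}$ of the spectrum; there the hard point is again to control the bottom of the essential spectrum uniformly over all closed subspaces.
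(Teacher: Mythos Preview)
Your reduction is sound and tracks the paper's argument closely. For $(\ref{positiveAM})\Leftrightarrow(\ref{AMplusidentity})$ the paper stays on the unbounded side, quoting \cite[Theorem~4.10]{SHKGRAM1} (the statement $S\in\mathcal{AM}_c\Leftrightarrow S^*S\in\mathcal{AM}_c$ for injective closed $S$) applied to $S=T+iI$, rather than passing to bounded inverses and invoking the $\mathcal{AN}$-analogue $B\in\mathcal{AN}\Leftrightarrow B^*B\in\mathcal{AN}$ as you do; the two routes are equivalent via Theorem~\ref{inverseAN}, so nothing is lost either way.

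For $(\ref{AMplusidentity})\Leftrightarrow(\ref{boundedtransformAM})$ you correctly isolate the crux as the duality $A\in\mathcal{AN}(H)\Leftrightarrow I-A\in\mathcal{AM}(H)$ for the positive contraction $A=(I+T^2)^{-1}$, and you are right that this is where the structure theorems enter. The paper does not cite this duality as a black box but carries it out explicitly: from the $\mathcal{AN}$-structure theorem \cite[Theorem~2.5]{VNGR1} it writes $A=\alpha I+K-F$ with $KF=0$ and $F\le\alpha I$, so that $I-A=(1-\alpha)I-K+F$, and then verifies the side condition $\|K\|\le 1-\alpha$ required by the $\mathcal{AM}$-structure theorem \cite[Theorem~5.8]{GRS2}. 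That inequality is extracted from the contractivity $A\le I$ via the short computation $(\alpha I+K)K=AK\le K\Rightarrow K^2\le(1-\alpha)K\Rightarrow\|K\|\le 1-\alpha$. The converse direction proceeds symmetrically through \cite[Theorem~5.2]{SP}. So your diagnosis of where the real obstacle lies is exactly on target; what your outline leaves unproved is precisely this verification that the side inequalities transfer when one passes between the $\mathcal{AN}$- and $\mathcal{AM}$-structure theorems, and the paper's contribution at this step is just that computation.
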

\begin{proof}
Proof of (\ref{positiveAM}) if and only (\ref{AMplusidentity}): By Theorem \ref{complexperturb}, we know that $T\in \mathcal{AM}_{c}(H)$ if and only $T+iI\in \mathcal{AM}_{c}(H) $. Since $T+iI$ is one to one, by applying \cite[Theorem 4.10]{SHKGRAM1}, we can conclude that $T+iI\in \mathcal{AM}_{c}(H) $ if and only if $T^2+I=(T+iI)^*(T+iI)\in \mathcal{AM}_{c}(H)$.

To prove the equivalence of (\ref{AMplusidentity}) and (\ref{boundedtransformAM}), first we observe that
\begin{equation}\label{bddtransformeqn}
  Z_{T}^*Z_{T} =T^2(I+T^2)^{-1}=I-(I+T^2)^{-1}.
\end{equation}
  Now, if $I+T^2\in \mathcal{AM}_{c}(H)$, then $(I+T^2)^{-1}$ is absolutely norm attaining, by Theorem \ref{inverseAN}. Hence by \cite[Theorem 2.5]{VNGR1}, there exists a compact positive operator $K$ and positive finite rank operator $F$ and $\alpha \geq 0$ such that
  \begin{equation}\label{interimeq2}
   (I+T^2)^{-1}=\alpha I+K-F
  \end{equation}
 with $KF=0$ and $F\leq \alpha I$. Now, by Equation \ref{bddtransformeqn}, we have that $Z_{T}^{*}Z_{T}=(1-\alpha )I-K+F$. Our idea is to apply  \cite[Theorem 5.8]{GRS2} and  conclude  $Z_{T}^*Z_{T}\in \mathcal{AM}(H)$. Then this will imply $Z_{T}\in \mathcal{AM}(H)$. For this purpose, we need to prove that $\|K\|\leq 1-\alpha$. By post multiplying Equation \ref{interimeq2} and using the facts that $KF=0$ and $K$ commute with $(I+T^2)^{-1}$, we get

 \begin{equation}\label{interimeq3}
  (I+T^2)^{-1}K=(\alpha I+K)K.
 \end{equation}
Since $\|(I+T^2)^{-1}\|\leq 1$, we have $(I+T^2)^{-1}K\leq K$ or $(\alpha I+K)K\leq K$. This implies that $K^2\leq (1-\alpha)K\leq (1-\alpha)\|K\|$. From this we can conclude that $\|K^2\|=\|K\|^2\leq (1-\alpha)\|K\|$ or $\|K\|\leq 1-\alpha$.

  To prove the implication $(\ref{boundedtransformAM})\Rightarrow (\ref{AMplusidentity})$, let $Z_{T}\in \mathcal{AM}(H)$. Then $Z_{T}^*Z_{T}\in \mathcal{AM}(H)$. Hence  there exists $K\in \mathcal K(H)_{+},\; F\in \mathcal F(H)_{+}$ and $\beta \geq 0$ satisfying $KF=0$ and $K\leq \beta I$ such that $Z_{T}^*Z_{T}=\beta I-K+F$. Hence by Equation \ref{bddtransformeqn}, we have that
  \begin{equation*}
   (T^2+I)^{-1}=(1-\beta)I+K-F\in \mathcal{AN}(H),
  \end{equation*}
  by \cite[Theorem 5.2]{SP}.
    Now by Theorem \ref{inverseAN}, we can conclude that $I+T^2\in \mathcal{AM}_{c}(H)$.
\end{proof}

Our next goal before proving the spectral theorem is to show that  an absolutely minimum attaining closed operator can either have a finite-dimensional null space or finite-dimensional range space. This property is helpful in deciding the spectrum of such an operator. To achieve this we prove the following results.

\begin{lemma}\label{normattainingbddtransform}
  Let $T\in \mathcal C(H)$ be densely defined. If $Z_{T}$ is norm attaining, then $T\in \mathcal B(H)$.
\end{lemma}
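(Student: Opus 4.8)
The plan is to prove the contrapositive: assuming $T\notin\mathcal B(H)$, I will show that $Z_T$ cannot be norm attaining. The starting point is the characterization recorded just before the lemma, namely that $T\in\mathcal B(H)$ if and only if $\|Z_T\|<1$. Since $\|Z_T\|\le 1$ always holds, this means that whenever $T$ is unbounded we have $\|Z_T\|=1$. Thus if $Z_T$ were norm attaining it would supply a unit vector $x$ with $\|Z_T x\|=\|Z_T\|=1$, and the goal is to derive a contradiction.

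The main computational input is the identity $Z_T^*Z_T=I-(I+T^*T)^{-1}$. Writing $A:=(I+T^*T)^{-1}$, I record two properties of $A$: it is a bounded positive operator, being the inverse of the positive, boundedly invertible operator $I+T^*T\ge I$, and it is injective, being the inverse of a bijection of $H$ onto itself; in fact $0<A\le I$. For the assumed unit vector $x$ I then compute
\[
\|Z_T x\|^2=\langle Z_T^*Z_T x,x\rangle=\langle (I-A)x,x\rangle=\|x\|^2-\langle Ax,x\rangle=1-\langle Ax,x\rangle.
\]
Since $\|Z_T x\|=1$, this forces $\langle Ax,x\rangle=0$. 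Factoring $A=A^{1/2}A^{1/2}$ with $A^{1/2}\ge 0$ gives $\langle Ax,x\rangle=\|A^{1/2}x\|^2$, so $A^{1/2}x=0$, hence $Ax=0$, and by injectivity of $A$ we get $x=0$, contradicting $\|x\|=1$. Therefore no unit vector attains the norm of $Z_T$ when $T$ is unbounded, which proves the lemma by contraposition.

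I expect the only point requiring care to be the justification of the properties of $A$: that $I+T^*T$ is a bijection of $H$ with bounded positive inverse. These are standard consequences of von Neumann's theorem, which guarantees that for densely defined closed $T$ the operator $T^*T$ is self-adjoint and positive, so that $I+T^*T\ge I$ is boundedly invertible and its inverse is injective and positive. Everything else reduces to the short direct computation above, so I do not anticipate a substantive obstacle beyond invoking these facts correctly.
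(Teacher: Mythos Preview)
Your proof is correct and follows essentially the same approach as the paper: both reduce to showing that a norm-attaining vector $x_0$ for $Z_T$ with $\|Z_T\|=1$ would force $(I+T^*T)^{-1}x_0=0$, contradicting injectivity. The only cosmetic difference is that the paper passes through the eigenvector identity $Z_T^*Z_T x_0=x_0$ directly, whereas you compute the quadratic form and invoke the square root; the logic is identical.
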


\begin{proof}
We know that $\|Z_{T}\|\leq 1$. If $\|Z_T\|<1$, then clearly $T\in \mathcal B(H)$, by \cite[Corollary 2.1]{Tharifigap}. Next assume that $\|Z_{T}\|=1$. Since $Z_T$ is norm attaining, there exists $x_0\in S_H$ such that $Z_T^*Z_Tx_0=x_0$. That is, $(I-(I+T^*T)^{-1})x_0=x_0$ or $(I+T^*T)^{-1}{x_0}=0$. This imply that $x_0=0$, a contradiction. Hence, the assumption that $\|Z_T\|=1$ is wrong. That is, $\|Z_T\|<1$.  Now the conclusion follows by the earlier case.
\end{proof}

\begin{proposition}
 Let $T\in \mathcal C(H_1,H_2)$ be densely defined. If $T$ has finite rank, then $T\in \mathcal B(H_1,H_2)$.
\end{proposition}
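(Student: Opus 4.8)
The plan is to reduce the statement to Lemma~\ref{normattainingbddtransform} via the bounded transform. The key observation I would record first is that the range is preserved under the bounded transform: by the facts noted just before Lemma~\ref{normattainingbddtransform}, one has $R(Z_T)=R(T)$. Hence if $T$ has finite rank, then $Z_T$ has the same finite-dimensional range, so $Z_T$ is a finite rank bounded operator and in particular compact.

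Next I would invoke the standard fact that a compact operator between Hilbert spaces is norm attaining. To see it, take a maximizing sequence $(x_n)$ in the unit ball of $H_1$ and pass to a weakly convergent subsequence $x_{n_k}\rightharpoonup x$ with $\|x\|\leq 1$; compactness forces $Z_Tx_{n_k}\to Z_Tx$ in norm, so $\|Z_Tx\|=\|Z_T\|$, and (assuming $Z_T\neq 0$) the normalized vector $x/\|x\|$ attains the norm. Thus $Z_T$ is norm attaining.

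Finally, with $Z_T$ norm attaining I would apply Lemma~\ref{normattainingbddtransform} to conclude that $T$ is bounded. The one point needing care --- and the only place I expect any friction --- is that Lemma~\ref{normattainingbddtransform} is stated for operators in $\mathcal C(H)$, whereas here $T\in\mathcal C(H_1,H_2)$. This is harmless: the proof of that lemma uses only the identity $Z_T^*Z_T=I-(I+T^*T)^{-1}$ on $H_1$ together with the injectivity of $(I+T^*T)^{-1}$, both of which hold verbatim in the two-space setting, so the conclusion $\|Z_T\|<1$ (equivalently, $T\in\mathcal B(H_1,H_2)$) carries over unchanged.

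As a self-contained alternative that avoids the bounded transform entirely, I could argue from the carrier decomposition $D(T)=N(T)\oplus^\bot C(T)$: the restriction $T|_{C(T)}$ is injective into the finite-dimensional space $R(T)$, forcing $\dim C(T)\leq \dim R(T)<\infty$, so $T|_{C(T)}$ is bounded; since $T$ vanishes on $N(T)$ and the decomposition is orthogonal, $\|Tx\|=\|T(P_{C(T)}x)\|\leq \|T|_{C(T)}\|\,\|x\|$ shows $T$ is bounded on $D(T)$. Closedness of $T$ then makes $D(T)$ closed, and density gives $D(T)=H_1$, whence $T\in\mathcal B(H_1,H_2)$.
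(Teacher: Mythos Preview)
Both of your approaches are correct. Your second, ``self-contained alternative,'' is precisely the paper's own argument: the carrier decomposition $D(T)=N(T)\oplus^\bot C(T)$, finite-dimensionality of $C(T)$ from injectivity of $T|_{C(T)}$ into $R(T)$, boundedness of $T$ on $D(T)$ via the orthogonal splitting, and finally $D(T)=H_1$ from the fact that $N(T)\oplus C(T)$ is closed (closed plus finite-dimensional) together with density.

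Your primary route via the bounded transform is a genuinely different path. It leverages $R(Z_T)=R(T)$ to make $Z_T$ finite-rank (hence compact, hence norm attaining) and then invokes Lemma~\ref{normattainingbddtransform}. This is conceptually tidy and reuses machinery the paper has already set up, at the cost of being less elementary and requiring the (harmless, as you correctly note) extension of that lemma from $\mathcal C(H)$ to $\mathcal C(H_1,H_2)$. The paper's direct argument needs no such apparatus---it is pure linear algebra plus the closedness of a sum of a closed and a finite-dimensional subspace---and so stands independently of the bounded-transform framework.
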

\begin{proof}
 Since $T$ is closed, $N(T)$ is  closed and hence $D(T)=N(T)\oplus C(T)$, where $C(T)=D(T)\cap N(T)^{\bot}$. Note that $T_0=T|_{C(T)}$ is a bijection from $C(T)$ onto $R(T)$. This implies that $C(T)$ is finite-dimensional and hence $T_0$ is bounded. Suppose its norm is $M$ and consider any $x$ in the domain $D(T)$ of $T$. Then $x$ can be written uniquely as
$x = u + v$ with $u \in N(T)$ and $v\in C(T)$. Since $D(T)$ is a subspace, $v = x - u$ is in $D(T)$. Now
\begin{equation*}
\|T(x)\| = \|T(v)\| \leq M \|v\| \leq M \|x\|.
\end{equation*}
This shows that $T$ is bounded on $D(T)$. Since $D(T)$ is an orthogonal
direct sum of a closed subspace $N(T)$ and a finite-dimensional subspace $C(T)$,  it is closed. As $T$ is densely defined, we obtain that $D(T)=H$.
\end{proof}

\begin{corollary}\label{infiniterange}
 Let $T\in \mathcal C(H_1,H_2)$ be densely defined and unbounded. Then $R(T)$ must be infinite-dimensional.
\end{corollary}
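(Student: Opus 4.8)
The plan is to prove this by contraposition, invoking the Proposition that immediately precedes it. Observe first that the phrase ``$R(T)$ is finite-dimensional'' is, by definition of rank, exactly the assertion that $T$ has finite rank. So the contrapositive of the desired statement reads: if $R(T)$ is finite-dimensional, then $T$ is bounded. This is precisely the content of the preceding Proposition, so the corollary should follow with almost no additional work.

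Concretely, I would argue by contradiction. Suppose, contrary to the conclusion, that $R(T)$ is finite-dimensional. Then $T$ is a densely defined closed operator of finite rank, so the hypotheses of the Proposition are met, and I may conclude $T\in\mathcal B(H_1,H_2)$, i.e. $T$ is bounded. This directly contradicts the standing assumption that $T$ is unbounded. Hence the supposition is untenable and $R(T)$ must be infinite-dimensional.

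I do not anticipate a genuine obstacle here, since all the analytic substance has already been absorbed into the Proposition. The only point warranting a moment's care is the identification of ``finite rank'' with ``$R(T)$ finite-dimensional,'' which is immediate from the definition of rank as $\dim R(T)$. In particular, the key mechanism being exploited is the fact, established in the Proposition, that closedness together with finite rank forces the carrier $C(T)=D(T)\cap N(T)^{\bot}$ to be finite-dimensional, on which $T$ necessarily acts as a bounded operator, and that $D(T)$ is then a closed subspace, hence all of $H_1$ by density. Thus the corollary is essentially just a restatement of the Proposition in contrapositive form.
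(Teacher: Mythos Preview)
Your proposal is correct and is exactly the argument the paper intends: the corollary is stated without proof precisely because it is the contrapositive of the preceding Proposition. There is nothing to add or adjust.
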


To prove the converse of Lemma \ref{normattainingbddtransform}, we need the following result.
\begin{theorem}\label{finiterankchar}
  Let $T$ be an unbounded positive operator $T\in \mathcal{AM}_{c}(H)$. Then  $N(T)$ is finite-dimensional.
\end{theorem}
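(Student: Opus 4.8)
The plan is to reduce the claim to a statement about a bounded positive operator whose spectral structure is completely understood, namely the resolvent-type operator $(T^2+I)^{-1}$, and then to identify $N(T)$ with one of its eigenspaces. Since $T$ is positive and unbounded, Theorem \ref{equivalentAMs} gives $T^2+I\in\mathcal{AM}_{c}(H)$. The operator $T^2+I$ is self-adjoint, injective and bounded below (indeed $\langle (T^2+I)x,x\rangle\geq\|x\|^2$), so it has a bounded, everywhere-defined, positive inverse $B:=(T^2+I)^{-1}\in\mathcal B(H)$. By Theorem \ref{inverseAN}, $B\in\mathcal{AN}(H)$.

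Next I would invoke the structure theorem for positive absolutely norm attaining operators (\cite[Theorem 2.5]{VNGR1}), already used in the proof of Theorem \ref{equivalentAMs}, to write $B=\alpha I+K-F$ with $K\in\mathcal K(H)_{+}$, $F\in\mathcal F(H)_{+}$, $KF=0$ and $F\leq\alpha I$, for some $\alpha\geq 0$. The decisive point is to show $\alpha=0$. Since $K-F$ is compact, $B$ is a compact perturbation of $\alpha I$, whence $\sigma_{ess}(B)=\{\alpha\}$. On the other hand $T^2+I$ is unbounded self-adjoint with $\sigma(T^2+I)\subseteq[1,\infty)$ unbounded, so by spectral mapping for the inverse $\sigma(B)\setminus\{0\}=\{\lambda^{-1}:\lambda\in\sigma(T^2+I)\}$ accumulates at $0$; thus $0$ is a non-isolated point of the spectrum of the self-adjoint operator $B$, forcing $0\in\sigma_{ess}(B)=\{\alpha\}$. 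Hence $\alpha=0$, and then $0\leq F\leq\alpha I=0$ gives $F=0$, so $B=K$ is compact. (This incidentally shows $T$ has compact resolvent, the promised additional conclusion.)

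Finally I would identify the null space. For the positive operator $T$ one checks $N(T)=N(T^2)=\{x\in H:Bx=x\}$: indeed $Bx=x$ forces $x\in R(B)=D(T^2)$ and $(T^2+I)x=x$, i.e. $T^2x=0$, while conversely $T^2x=0$ gives $(T^2+I)x=x$ and hence $Bx=x$. Thus $N(T)$ is exactly the eigenspace of the compact operator $B$ for the eigenvalue $1$. Since $1\neq 0$, this eigenspace is finite-dimensional (possibly trivial), which is the desired conclusion.

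The main obstacle is the step $\alpha=0$: everything else is bookkeeping, but this is where the unboundedness of $T$ is genuinely used, and one must argue carefully, via the essential spectrum (equivalently, that an injective self-adjoint Fredholm operator is boundedly invertible), that a strictly positive $\alpha$ would make $B$ boundedly invertible and thereby contradict the unboundedness of $B^{-1}=T^2+I$.
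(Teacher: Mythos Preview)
Your argument is correct and there is no circularity: Theorem~\ref{equivalentAMs} is proved independently of Theorem~\ref{finiterankchar}, and the remaining steps (the structure of $B=(T^2+I)^{-1}$ as an $\mathcal{AN}$ operator, the identification of $\sigma_{ess}(B)=\{\alpha\}$, the accumulation of $\sigma(B)$ at $0$ forcing $\alpha=0$, and the identification $N(T)=\ker(B-I)$) are all sound.

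The paper, however, proceeds differently. It also passes through Theorem~\ref{equivalentAMs}, but uses the implication $T\in\mathcal{AM}_c(H)\Rightarrow Z_T\in\mathcal{AM}(H)$ rather than the one you chose. Then it argues by contradiction: if $N(T)=N(Z_T)$ were infinite-dimensional, a structural fact about bounded $\mathcal{AM}$ operators (\cite[Remark~3.2]{NBGR2}) forces $Z_T$ to have finite rank, hence $R(T)=R(Z_T)$ is finite-dimensional and $T$ is bounded, contradicting the hypothesis. Your route avoids the external citation to \cite{NBGR2} and instead exploits the $\mathcal{AN}$ structure theorem already invoked inside Theorem~\ref{equivalentAMs}; as a bonus it yields directly that $(T^2+I)^{-1}$ is compact, anticipating the compact-resolvent characterization that the paper only reaches later (Theorems~\ref{posAMstructure} and~\ref{selfadjequivalent}). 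The paper's proof is shorter and more combinatorial in flavor, while yours is more self-contained and extracts a stronger intermediate conclusion.
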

\begin{proof}
 It is clear that $R(T)$ is infinite  dimensional by  Corollary \ref{infiniterange}. Suppose that $N(T)$ is infinite-dimensional. Then $N(Z_{T})$ is infinite-dimensional and hence $Z_{T}\in \mathcal F(H)$ by \cite[Remark 3.2]{NBGR2}. Since $R(T)=R(Z_{T})$, $T$ is a finite rank operator. Since $Z_T$ is norm attaining, by Lemma \ref{normattainingbddtransform}, we can conclude that $T$ is bounded.  This completes the proof.
\end{proof}

\begin{corollary}\label{finitedimnullspace}
  Let $T\in \mathcal {AM}_{c}(H_1,H_2)$ be densely defined and unbounded. Then $N(T)$ is finite-dimensional.
\end{corollary}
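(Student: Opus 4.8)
The plan is to reduce the statement to the positive case already handled in Theorem \ref{finiterankchar} by passing to the modulus $|T| := (T^*T)^{\frac{1}{2}}$. As recorded in Section 2, $|T|$ is a densely defined closed positive operator satisfying $D(|T|) = D(T)$, $N(|T|) = N(T)$, and $\||T|x\| = \|Tx\|$ for every $x \in D(T)$. Since $N(T) = N(|T|)$, it suffices to prove that $N(|T|)$ is finite-dimensional, and for that I would invoke Theorem \ref{finiterankchar} with the positive operator $|T|$ on the infinite-dimensional space $H_1$.

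To do so I must verify the two hypotheses of that theorem. First, $|T|$ is unbounded: because $\||T|x\| = \|Tx\|$ holds on the common domain $D(T) = D(|T|)$, the operator $|T|$ is bounded exactly when $T$ is, and $T$ is unbounded by assumption. Second, $|T| \in \mathcal{AM}_{c}(H_1)$: for any closed subspace $M \subseteq H_1$ we have $M \cap D(T) = M \cap D(|T|)$, and the equality of norms on this set gives $m(T|_M) = m(|T||_M)$, with a unit vector $x_0$ attaining the infimum for $T|_M$ if and only if it attains it for $|T||_M$. Hence the absolutely minimum attaining property of $T$ transfers verbatim to $|T|$.

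With both hypotheses verified, Theorem \ref{finiterankchar} gives that $N(|T|)$ is finite-dimensional, and therefore so is $N(T) = N(|T|)$, completing the argument. I do not expect a genuine obstacle here: the proof is entirely a transfer of structure across the norm identity $\|Tx\| = \||T|x\|$, and every ingredient about $|T|$ that is needed has already been collected in Section 2. The only step warranting a line of care is the equivalence of the $\mathcal{AM}_{c}$ property for $T$ and $|T|$, which follows at once from that identity on the shared domain.
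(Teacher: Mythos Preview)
Your proof is correct and follows essentially the same approach as the paper: pass to the modulus $|T|$, use $N(T)=N(|T|)$ and the norm identity to transfer the $\mathcal{AM}_c$ property and unboundedness, then invoke Theorem~\ref{finiterankchar}. You have simply spelled out in full the verifications that the paper compresses into a single sentence.
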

\begin{proof}
Since $T\in \mathcal{AM}_{c}(H_1,H_2)$, we have $|T|\in \mathcal{AM}_{c}(H_1)$ and by Theorem \ref{finiterankchar}, we can conclude that $N(|T|)=N(T)$ is finite dimensional.
\end{proof}

\begin{remark}
 If $A\in \mathcal{AM}(H_1,H_2)$, then either $R(A)$ or $N(A)$ is finite-dimensional (see \cite[Proposition 5.19]{GRS2} for details). But  Corollary \ref{finitedimnullspace} says that it is not the case  when the operator is unbounded. The reason for this difference is that the finite rank operators are included in the class of bounded absolutely minimum attaining operators.
\end{remark}

Next, we describe the structure of unbounded positive absolutely minimum attaining operators. This result generalizes that of \cite[Theorem 4.8]{SHKGRAM1}.
\begin{theorem}\label{posAMstructure}
Let $T\in \mathcal{AM}_{c}(H)$ be positive and unbounded. There exists an unbounded (increasing) sequence ${\{\lambda_n}\}$ of eigenvalues of $T$ with corresponding orthonormal eigenvectors ${\{\phi_n}\}$ such that
\begin{enumerate}
 \item \label{domain}$D(T)=\left \{  x\in H: \displaystyle \sum_{n=1}^{\infty} \lambda_n^2|\langle x,\phi_n\rangle |^2<\infty \right \} $ and
 $$Tx=\displaystyle \sum_{n=1}^{\infty} \lambda_n \langle x,\phi_n\rangle \phi_n,\; \text{for all}\; x\in D(T)$$
 \item \label{spectrum} $\sigma(T)\subseteq {\{\lambda_n:n\in \mathbb N}\}\cup {\{0}\}=\sigma_{p}(T)\cup {\{0}\}$
 \item \label{discreteness} If $\mu \in \sigma_{p}(T)$, then it is an eigenvalue with finite multiplicity
 \item \label{eigenbasis}$\overline{\text{span}}{\{\phi_n:n\in \mathbb N}\}=N(T)^{\bot}$
  \item \label{diagonalizability}$T$ is diagonalizable. That is, there exists an orthonormal basis of  $H$ consisting of  eigenvectors of $T$
   \item\label{minmattainingonsubset}    For every subset $S$ of $\mathbb N$, we have  \begin{equation*}
     \inf{\{\lambda_n:n\in S}\}=\min{\{\lambda_n:n\in S}\}.
   \end{equation*}

\end{enumerate}

\begin{proof}
If $T$ is one-to-one then all the statements (\ref{domain}) to (\ref{eigenbasis}) follow from \cite[Theorem 4.8]{SHKGRAM1}. If $T$ is not one-to-one, then by Corollary \ref{finitedimnullspace}, $N(T)$ is finite-dimensional. That is, $0\in \sigma_p(T)$. Since $T=0\oplus T|_{N(T)^{\bot}}$,  applying \cite[Theorem 4.8]{SHKGRAM1} to $T|_{N(T)^{\bot}}$, the conclusion follows.

The proof of (\ref{diagonalizability}) follows by taking an orthonormal basis of $N(T)$ and adjoining it to the eigenbasis of $N(T)^{\bot}$ obtained in (\ref{eigenbasis}). To prove (\ref{minmattainingonsubset}), first
note that by (\ref{diagonalizability}), $T$ is diagonalizable. First assume that
$N(T)\neq  {\{0}\}$. Then $N(T)$ is finite-dimensional and hence $0 \in  \sigma_d(T)$.
In this case $\sigma(T) = \sigma_d(T) = {\{\lambda_n : n \in \mathbb  N}\} \cup {\{0}\}$.
We write $T = 0 \oplus T_0$, where $T_0 = T|_{N(T)^\bot}$. Since
$T_0 \in \mathcal{AM}(N(T)^{\bot})$ and $R(T_0) = R(T)$ is closed as $T \in \mathcal{ AM}_c(H)$, we conclude that $T_0$ has a bounded inverse and by Theorem \ref{inverseAN}, we
have $T^{-1}_0\in \mathcal{AN}(N(T)^{\bot})$.  Hence $\sigma(T_{0}^{-1}) = {\{\lambda_n^{-1}:n\in \mathbb{N} }\}$. Now by \cite[ Theorem 3.8(i)]{SP}, we have $\max {\{\lambda_k : k \in S \setminus {0}}\} =
\sup {\{\lambda_k^{-1} : k \in S \setminus  {0}}\}$. From this we get that $\inf {\{\lambda_k : k \in S \setminus {0}}\}=
\min {\{\lambda_k : k \in S \setminus {0}}\}$. Note that if $0 \in  S$, then
\begin{equation*}
\inf {\{\lambda_k : k \in S}\} = 0 =\min {\{\lambda_k : k \in  S}\}.
\end{equation*}
This completes the proof.
  \end{proof}
\end{theorem}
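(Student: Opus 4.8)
The plan is to reduce the general case to the injective case, which is already settled by the structure theorem \cite[Theorem 4.8]{SHKGRAM1}, and then to recover the finitely many zero eigenvalues separately. First I would dispose of the null space. By Corollary \ref{finitedimnullspace}, since $T$ is unbounded and lies in $\mathcal{AM}_{c}(H)$, the subspace $N(T)$ is finite-dimensional. Because $T$ is positive and self-adjoint, $N(T)$ reduces $T$, so I can write $T = 0 \oplus T_0$ where $T_0 := T|_{N(T)^\perp}$ acts on $N(T)^\perp$. The restriction $T_0$ is again positive, injective, and unbounded (boundedness of $T_0$ together with finite-dimensionality of $N(T)$ would force $T$ to be bounded), and the $\mathcal{AM}_{c}$ property is inherited when passing to a reducing restriction. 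Applying \cite[Theorem 4.8]{SHKGRAM1} to $T_0$ then produces the increasing unbounded sequence $\{\lambda_n\}$ of eigenvalues with orthonormal eigenvectors $\{\phi_n\}$, yielding statements (\ref{domain}) through (\ref{eigenbasis}) on $N(T)^\perp$; since the zero part contributes nothing to $Tx$ for $x \in N(T)^\perp$ and $\overline{\operatorname{span}}\{\phi_n\} = N(T)^\perp$, these hold as stated for $T$.

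For the diagonalizability claim (\ref{diagonalizability}), I would simply adjoin to $\{\phi_n\}$ an orthonormal basis of the finite-dimensional space $N(T)$; each such vector is an eigenvector with eigenvalue $0$, and together these form an orthonormal basis of $H = N(T) \oplus N(T)^\perp$ consisting of eigenvectors of $T$. For the spectrum statement (\ref{spectrum}) and the finite-multiplicity statement (\ref{discreteness}), the diagonal form shows that every spectral point other than $0$ is one of the $\lambda_n$, each of finite multiplicity because the $\lambda_n$ are isolated and tend to infinity; the only possible extra spectral point is $0$, which belongs to $\sigma_p(T)$ exactly when $N(T) \neq \{0\}$.

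The main obstacle is the final statement (\ref{minmattainingonsubset}), that the infimum of the eigenvalues over any index subset $S$ is attained, and I would handle it by passing to the inverse. Because $T \in \mathcal{AM}_{c}(H)$ forces $R(T) = R(T_0)$ to be closed, the injective operator $T_0$ has a bounded inverse, and by Theorem \ref{inverseAN} we obtain $T_0^{-1} \in \mathcal{AN}(N(T)^\perp)$ with spectrum $\{\lambda_n^{-1} : n \in \mathbb{N}\}$. The crucial input is then the known fact for absolutely norm attaining operators, \cite[Theorem 3.8(i)]{SP}, that over any subset the supremum of the spectrum of $T_0^{-1}$ is attained as a maximum, i.e. $\sup\{\lambda_k^{-1} : k \in S'\} = \max\{\lambda_k^{-1} : k \in S'\}$ where $S' = S \setminus \{0\}$. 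Inverting gives $\inf\{\lambda_k : k \in S'\} = \min\{\lambda_k : k \in S'\}$, and if $0 \in S$ then both sides of the desired equality equal $0$, so $\inf\{\lambda_n : n \in S\} = \min\{\lambda_n : n \in S\}$ holds in every case.

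The one point genuinely requiring care, and where I expect most of the real work to sit, is verifying that the $\mathcal{AM}_{c}$ property transfers faithfully to the reducing restriction $T_0$ and that $R(T_0) = R(T)$ is closed, since both facts are what legitimize the invocation of Theorem \ref{inverseAN} and hence the entire inverse-operator argument underlying (\ref{minmattainingonsubset}).
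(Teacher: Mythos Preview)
Your proposal is correct and follows essentially the same route as the paper: both arguments invoke Corollary~\ref{finitedimnullspace} to make $N(T)$ finite-dimensional, decompose $T = 0 \oplus T_0$ with $T_0 = T|_{N(T)^{\perp}}$, apply \cite[Theorem~4.8]{SHKGRAM1} to the injective part to obtain (\ref{domain})--(\ref{eigenbasis}), adjoin a basis of $N(T)$ for (\ref{diagonalizability}), and then deduce (\ref{minmattainingonsubset}) by passing to $T_0^{-1} \in \mathcal{AN}(N(T)^{\perp})$ via Theorem~\ref{inverseAN} and invoking \cite[Theorem~3.8(i)]{SP}. The only cosmetic difference is that the paper separates out the injective case first, whereas you fold it into the general argument; your closing remark about verifying that $R(T_0)$ is closed and that the $\mathcal{AM}_c$ property passes to the reducing restriction is exactly the point the paper uses (closedness of $R(T)$ for $\mathcal{AM}_c$ operators is \cite[Proposition~4.2]{SHKGRAM1}, and the restriction property is immediate from the definition).
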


\begin{corollary}\label{noinfinitedmnlssp}
  Let $T\in \mathcal C(H_1,H_2)$ be densely defined and unbounded. Then  $T\in \mathcal{AM}_{c}(H_1,H_2)$ if and only if $D(T)$ does not contain an infinite-dimensional closed subspace.
\end{corollary}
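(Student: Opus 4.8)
The plan is to reduce the whole statement to the positive case and to pin down a single operator-theoretic bridge between the two conditions, namely compactness of $(I+T^2)^{-1}$. First I would pass from $T\in\mathcal C(H_1,H_2)$ to its modulus $|T|=(T^*T)^{1/2}\in\mathcal C(H_1)$. Since $D(T)=D(|T|)$ and $\|Tx\|=\||T|x\|$ for every $x\in D(T)$, the two operators have identical minimum moduli on every closed subspace and attain them on the same vectors; hence $T\in\mathcal{AM}_c(H_1,H_2)$ iff $|T|\in\mathcal{AM}_c(H_1)$, the domain hypothesis is literally the same for $T$ and $|T|$, and $|T|$ is unbounded iff $T$ is. So it suffices to treat a positive unbounded operator, which I continue to call $T$.

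For the forward implication I would invoke Theorem \ref{posAMstructure}, which supplies orthonormal eigenvectors $\{\phi_n\}$ spanning $N(T)^{\bot}$, with eigenvalues $\lambda_n\to\infty$, and $D(T)=\{x:\sum_n\lambda_n^2|\langle x,\phi_n\rangle|^2<\infty\}$; by Corollary \ref{finitedimnullspace}, $N(T)$ is finite-dimensional. Suppose $M\subseteq D(T)$ is an infinite-dimensional closed subspace. As $T$ is closed and $M$ is closed, $T|_M$ is an everywhere-defined closed operator on the Hilbert space $M$, hence bounded: $\|Tx\|\le C\|x\|$ on $M$. Only finitely many $\lambda_n$ satisfy $\lambda_n\le 2C$, so $E:=N(T)\oplus\operatorname{span}\{\phi_n:\lambda_n\le 2C\}$ is finite-dimensional; let $P_E$ be the orthogonal projection onto $E$. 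For $x\in M$ one gets $4C^2\|x-P_Ex\|^2\le\sum_{\lambda_n>2C}\lambda_n^2|\langle x,\phi_n\rangle|^2\le\|Tx\|^2\le C^2\|x\|^2$, whence $\|P_Ex\|^2\ge\tfrac34\|x\|^2$. Thus $P_E|_M$ is injective and bounded below, embedding the infinite-dimensional $M$ into the finite-dimensional $E$, a contradiction.

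For the converse I would show that the domain hypothesis forces $(I+T^2)^{-1}$ to be compact and then run the equivalences already available. Set $A=(I+T^2)^{-1/2}$, a bounded positive operator with $R(A)=D((I+T^2)^{1/2})=D(T)$. If $A$ were not compact, some spectral projection $E_{[\varepsilon,1]}(A)$ would have infinite rank; on its range $M_0$ one has $A\ge\varepsilon I$, so $A|_{M_0}$ is bounded below and $A(M_0)$ is an infinite-dimensional closed subspace contained in $R(A)=D(T)$, contradicting the hypothesis. Hence $A$, and so $(I+T^2)^{-1}=A^2$, is compact. A positive compact operator lies in $\mathcal{AN}(H)$ (its representation $\alpha I+K-F$ from \cite[Theorem 2.5]{VNGR1} has $\alpha=0$; see also \cite[Theorem 5.2]{SP}). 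Then Theorem \ref{inverseAN} yields $I+T^2\in\mathcal{AM}_c(H)$, and Theorem \ref{equivalentAMs} yields $T\in\mathcal{AM}_c(H)$.

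The main obstacle is exactly the compactness step in the converse: converting the purely geometric condition ``$D(T)$ contains no infinite-dimensional closed subspace'' into ``$(I+T^2)^{-1}$ is compact.'' The clean device is to present $D(T)$ as $R((I+T^2)^{-1/2})$ and to test $A=(I+T^2)^{-1/2}$ against its own spectral subspaces: a non-compact positive $A$ has an infinite-dimensional spectral subspace on which it is bounded below, and the image of that subspace is an infinite-dimensional closed subspace sitting inside $D(T)$. Once compactness is secured, everything else is a routine chain through the already-established equivalences.
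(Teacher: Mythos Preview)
Your proof is correct. Both you and the paper begin with the same reduction to the positive operator $|T|$, using $D(T)=D(|T|)$ and $\|Tx\|=\||T|x\|$, but after that the two arguments diverge.

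The paper's proof is a two-line citation chain: it asserts that by Theorem~\ref{posAMstructure} the condition $|T|\in\mathcal{AM}_c(H_1)$ is equivalent to $|T|$ having purely discrete spectrum, and then quotes \cite[Page 113, Exercise 25]{schmudgen} for the equivalence of purely discrete spectrum with the absence of an infinite-dimensional closed subspace in the domain. So the entire analytic content is outsourced to Schm\"udgen's exercise.

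Your argument is self-contained. For the forward direction you use the closed graph theorem on $T|_M$ to bound $T$ on $M$, then the eigenvalue growth $\lambda_n\to\infty$ from Theorem~\ref{posAMstructure} to trap $M$ inside (a bounded-below projection onto) a finite-dimensional eigenspace sum. For the converse you identify $D(T)=R\bigl((I+T^2)^{-1/2}\bigr)$ and test $A=(I+T^2)^{-1/2}$ against its own spectral subspaces: non-compactness of $A$ would produce an infinite-rank spectral projection $E_{[\varepsilon,1]}(A)$, and $A$ is bounded below on its range, pushing an infinite-dimensional closed subspace into $D(T)$. Compactness of $(I+T^2)^{-1}$ then feeds directly into Theorems~\ref{inverseAN} and~\ref{equivalentAMs}. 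What you gain is transparency---the reader sees exactly why the domain condition forces compact resolvent---at the cost of length; what the paper gains is brevity at the cost of delegating the crux to an exercise in a textbook. Your converse argument is in fact essentially a proof of one direction of that exercise.
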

\begin{proof}
  We have that $T\in \mathcal{AM}_{c}(H_1,H_2)$ if and only if  $|T|\in \mathcal{AM}_{c}(H_1)$. By Theorem \ref{posAMstructure}, this is equivalent to the fact that  $|T|$ has pure discrete spectrum. By \cite[Page 113, Exercise 25]{schmudgen}, this is equivalent to the fact that  $D(|T|)=D(T)$ does not contain an infinite-dimensional closed subspace.
\end{proof}

\begin{proposition}\label{squareproperty}
  Let $T\in \mathcal C(H)$ be positive and unbounded. If $T\in \mathcal{AM}_{c}(H)$, then $T^2\in \mathcal{AM}_{c}(H)$.
\end{proposition}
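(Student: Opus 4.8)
The plan is to reduce the statement to the domain characterization in Corollary~\ref{noinfinitedmnlssp}, thereby avoiding any direct argument with restrictions to closed subspaces. First I would verify that $T^2$ is again a positive, densely defined, closed, and unbounded operator. Since $T$ is positive and self-adjoint (its diagonalization in Theorem~\ref{posAMstructure} has nonnegative real eigenvalues), $T^2 = T^*T$ is self-adjoint, hence closed, and positive because $\langle T^2x, x\rangle = \|Tx\|^2 \geq 0$ for every $x \in D(T^2)$. From the spectral representation of Theorem~\ref{posAMstructure}, $T$ is diagonalized by an orthonormal basis of eigenvectors with eigenvalues $\{\lambda_n\}$ (together with $0$ on the finite-dimensional null space), and $T^2$ is diagonalized by the same basis with eigenvalues $\{\lambda_n^2\}$. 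As $\{\lambda_n\}$ is unbounded, so is $\{\lambda_n^2\}$, whence $T^2$ is unbounded; and since each basis eigenvector lies in $D(T^2)$, the operator $T^2$ is densely defined.

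The decisive observation is the inclusion $D(T^2) \subseteq D(T)$, which is immediate from $D(T^2) = D(T^*T) = \{x \in D(T) : Tx \in D(T)\}$. Because $T$ is an unbounded member of $\mathcal{AM}_c(H)$, Corollary~\ref{noinfinitedmnlssp} guarantees that $D(T)$ contains no infinite-dimensional closed subspace. Consequently, any closed subspace $V \subseteq D(T^2)$ also satisfies $V \subseteq D(T)$ and must therefore be finite-dimensional, so $D(T^2)$ likewise contains no infinite-dimensional closed subspace. Applying Corollary~\ref{noinfinitedmnlssp} in the reverse direction to the densely defined unbounded operator $T^2$ then yields $T^2 \in \mathcal{AM}_c(H)$, as required.

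The hard part will be only the preliminary bookkeeping of the first paragraph: one must be careful to justify that $T^2$, whether read as the composition on its natural domain or through the functional calculus, is a genuine densely defined closed unbounded positive operator, so that Corollary~\ref{noinfinitedmnlssp} applies to it. Once this is settled, the argument is entirely structural and uses no estimates. A more computational alternative would proceed from Theorem~\ref{posAMstructure} directly, noting that property~(\ref{minmattainingonsubset}) passes from $T$ to $T^2$ because $t \mapsto t^2$ is increasing on $[0,\infty)$, so that $\inf\{\lambda_n^2 : n \in S\} = \big(\min\{\lambda_n : n \in S\}\big)^2 = \min\{\lambda_n^2 : n \in S\}$ for every $S \subseteq \mathbb{N}$; I would avoid this route, however, since converting such a spectral condition back into membership in $\mathcal{AM}_c(H)$ needs a converse to the structure theorem that is not isolated in the excerpt.
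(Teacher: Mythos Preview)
Your argument is correct and follows a genuinely different path from the paper's. The paper argues by cases: when $T$ is injective it invokes \cite[Theorem~4.10]{SHKGRAM1} directly; when $N(T)\neq\{0\}$ it decomposes $T=0\oplus T_0$ with $T_0=T|_{N(T)^\perp}$, applies the injective result to $T_0$ to obtain $T_0^2\in\mathcal{AM}_c(N(T)^\perp)$, then passes to the bounded transform via Theorem~\ref{equivalentAMs}, uses the finite-dimensionality of $N(T)$ together with \cite[Proposition~3.6]{NBGR2} to conclude that $Z_{T^2}=0\oplus Z_{T_0^2}\in\mathcal{AM}(H)$, and finally returns through Theorem~\ref{equivalentAMs} again. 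Your approach is considerably more economical: the single inclusion $D(T^2)\subseteq D(T)$ combined with Corollary~\ref{noinfinitedmnlssp} handles both cases at once, with no case split, no bounded-transform machinery, and no external citations. The paper's route has the mild advantage of not leaning on Corollary~\ref{noinfinitedmnlssp} (whose proof itself rests on Theorem~\ref{posAMstructure} and an exercise in \cite{schmudgen}); your route, once that corollary is available, makes the square property---and indeed the passage to any densely defined closed unbounded operator whose domain sits inside $D(T)$---essentially immediate.
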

\begin{proof}
 Note that if $T$ is one-to-one, then the result holds true by \cite[Theorem 4.10]{SHKGRAM1}. So let us assume that $N(T)\neq{\{0}\}$. Then $T=0\oplus T_0$, where $T_0=T|_{N(T)^{\bot}}$. By Corollary \ref{finitedimnullspace}, $N(T)$ is finite-dimensional. Clearly, by definition, $T_0\in \mathcal{AM}_{c}(N(T)^{\bot})$ and consequently by \cite[Theorem 4.10]{SHKGRAM1}, $T_0^2\in \mathcal{AM}_{c}(N(T)^{\bot})$. Hence by Theorem \ref{equivalentAMs}, $Z_{T_0^2}\in \mathcal{AM}(N(T)^{\bot})$. Since $N(Z_{T^2})=N(T^2)=N(T)$ is finite-dimensional, by \cite[Proposition 3.6]{NBGR2}, we can conclude that $Z_{T^2}=0\oplus Z_{T_0^2}\in \mathcal{AM}(H)$. Now the conclusion follows by Theorem \ref{equivalentAMs}.
  \end{proof}

  \begin{proposition}\label{scalarperturbAM}
  If $T\in \mathcal {AM}_{c}(H)$ is positive, unbounded and $\alpha>0$, then $T+\alpha I \in \mathcal{AM}_{c}(H)$.
\end{proposition}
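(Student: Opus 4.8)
The plan is to route the entire argument through the domain characterization recorded in Corollary \ref{noinfinitedmnlssp}: for a densely defined \emph{unbounded} closed operator, membership in $\mathcal{AM}_{c}(H)$ is equivalent to the domain containing no infinite-dimensional closed subspace. Since translating an operator by a bounded multiple of the identity leaves its domain unchanged, this reduces the whole claim to the observation that $D(T+\alpha I)=D(T)$, after which the result is immediate. So the real work is simply to confirm that $T+\alpha I$ is eligible to have Corollary \ref{noinfinitedmnlssp} applied to it.

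First I would verify the preliminaries. The operator $T+\alpha I$ is densely defined, since $D(T+\alpha I)=D(T)$ is dense in $H$; it is closed, because a bounded perturbation of a closed operator is closed (if $x_n\to x$ and $(T+\alpha I)x_n\to y$, then $Tx_n=(T+\alpha I)x_n-\alpha x_n\to y-\alpha x$, so closedness of $T$ gives $x\in D(T)$ and $(T+\alpha I)x=y$); it is positive, since $\langle (T+\alpha I)x,x\rangle=\langle Tx,x\rangle+\alpha\|x\|^2\geq \alpha\|x\|^2>0$ for $x\neq 0$; and it is unbounded, for if $T+\alpha I$ were bounded then $T=(T+\alpha I)-\alpha I$ would be bounded, contradicting the hypothesis. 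Crucially, $D(T+\alpha I)=D(T)$ on the nose.

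Now I would apply Corollary \ref{noinfinitedmnlssp} twice. Since $T\in\mathcal{AM}_{c}(H)$ is unbounded, that corollary says $D(T)$ contains no infinite-dimensional closed subspace. As $D(T+\alpha I)=D(T)$, the same holds for $D(T+\alpha I)$. Applying Corollary \ref{noinfinitedmnlssp} in the reverse direction to the densely defined, unbounded, closed operator $T+\alpha I$ then yields $T+\alpha I\in\mathcal{AM}_{c}(H)$, which is the desired conclusion.

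An alternative route, for robustness, is spectral: by Theorem \ref{posAMstructure} the positive operator $T$ is diagonalizable with eigenvalues $\lambda_n\to\infty$ of finite multiplicity (and possibly $0$ on a finite-dimensional null space), so $T+\alpha I$ is diagonalizable with eigenvalues $\lambda_n+\alpha\to\infty$ (and $\alpha$ on $N(T)$), hence has purely discrete spectrum and therefore lies in $\mathcal{AM}_{c}(H)$ by the equivalence used in the proof of Corollary \ref{noinfinitedmnlssp}. In either formulation the substantive content is already packaged in the established characterizations, so there is no genuine obstacle here; the only thing to check is that adding $\alpha I$ neither enlarges the domain nor makes the operator bounded. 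The pitfall one would face \emph{without} these characterizations — verifying the minimum-attaining property on every closed subspace $M$ directly — is precisely what the domain description lets us bypass.
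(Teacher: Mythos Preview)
Your proof is correct and takes a genuinely different route from the paper's. The paper argues by decomposing $T=0\oplus T_0$ on $N(T)\oplus N(T)^{\bot}$, using that $T_0$ is bijective with $T_0\in\mathcal{AM}_c(N(T)^{\bot})$ to get $(T_0+\alpha I_{N(T)^{\bot}})^{-1}\in\mathcal{K}(N(T)^{\bot})$ via Theorem~\ref{inverseAN}, and then assembling $(T+\alpha I)^{-1}$ as the direct sum of this compact piece with the finite-rank operator $\alpha^{-1}I_{N(T)}$; a final appeal to Theorem~\ref{inverseAN} finishes. You instead invoke Corollary~\ref{noinfinitedmnlssp} twice, reducing everything to the trivial identity $D(T+\alpha I)=D(T)$. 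Your argument is shorter and, as you observe, does not use positivity or the sign of $\alpha$ at all: it in fact proves that any bounded perturbation of an unbounded operator in $\mathcal{AM}_c(H)$ remains in $\mathcal{AM}_c(H)$ (the perturbed operator being automatically unbounded). The paper's approach, on the other hand, is more self-contained, relying only on Theorem~\ref{inverseAN} and the finite-dimensionality of $N(T)$; yours leans on the converse direction of Corollary~\ref{noinfinitedmnlssp}, whose proof in turn rests on the equivalence between pure discrete spectrum and $\mathcal{AM}_c$-membership that Theorem~\ref{posAMstructure} records only in one direction explicitly.
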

\begin{proof}
First note that since $T\in \mathcal{AM}_{c}(H)$, it follows that $R(T)$ is closed. Also, $N(T)$ is finite-dimensional by Theorem \ref{finiterankchar}. Since $N(T)$ is a reducing subspace for $T$, we have that
$T=\left(
   \begin{array}{cc}
     0 & 0 \\
     0 & T_0 \\
   \end{array}
 \right)
$, where $T_0=T|_{N(T)^{\bot}}$. Since $T_0$ is bijective and  $T_0\in \mathcal{AM}_{c}(N(T)^{\bot})$, it follows that $T_0+\alpha I_{N(T)^{\bot}}\in \mathcal{AM}_{c}(N(T)^{\bot})$. Hence by Theorem \ref{inverseAN}, we can conclude that   $(T_0+\alpha I_{N(T)^{\bot}})^{-1}\in \mathcal{K}(N(T)^{\bot})$. Hence we have $(T+\alpha I)^{-1}=\left(
                                                              \begin{array}{cc}
                                                              \alpha I_{N(T)} & 0 \\
                                                                0 & (T_0+\alpha I_{N(T)^{\bot}})^{-1} \\
                                                              \end{array}
                                                            \right)\in \mathcal K(H)$.
Now the conclusion follows by Theorem \ref{inverseAN}.
%By Proposition \ref{sqrtproperty}, we have $T^{\frac{1}{2}}\in \mathcal{M}_{c}(H)$. Hence by Theorem we have that $T^{\frac{1}{2}}\pm \sqrt{\alpha}I\in \mathcal{AM}_{c}(H)$. So by \cite[Theorem 4.10]{SHKGRAM1}, $T+\alpha I=(T^{\frac{1}{2}}+i\sqrt{\alpha}I)^*(T^{\frac{1}{2}}+i\sqrt{\alpha}I)\in \mathcal{AM}_{c}(H)$.
\end{proof}

\begin{corollary}\label{sqrtproperty}
Let $T\in \mathcal C(H)$ be positive and unbounded. If $T^2\in \mathcal{AM}_{c}(H)$, then $T\in \mathcal{AM}_{c}(H)$.
  \end{corollary}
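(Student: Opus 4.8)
The plan is to recognize this corollary as the converse of Proposition \ref{squareproperty}, obtainable by chaining together two results already established. The key observation is that Theorem \ref{equivalentAMs} reduces the target statement $T\in\mathcal{AM}_{c}(H)$ to the statement $T^2+I\in\mathcal{AM}_{c}(H)$: this is precisely the equivalence of items (\ref{positiveAM}) and (\ref{AMplusidentity}), applied to the positive unbounded operator $T$. Hence it suffices to derive $T^2+I\in\mathcal{AM}_{c}(H)$ from the hypothesis $T^2\in\mathcal{AM}_{c}(H)$, and this is exactly the kind of scalar shift handled by Proposition \ref{scalarperturbAM} with $\alpha=1$.

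First I would verify that Proposition \ref{scalarperturbAM} applies to $S:=T^2$, i.e.\ that $T^2$ is positive and unbounded. Positivity is immediate, since $\langle T^2x,x\rangle=\langle Tx,Tx\rangle=\|Tx\|^2\geq 0$ for all $x\in D(T^2)$; moreover, because $T$ is positive, hence self-adjoint, $T^2$ is itself self-adjoint and in particular is a densely defined closed operator. For unboundedness I would argue by contradiction: if $T^2$ were bounded, then, being self-adjoint, it would be everywhere defined, forcing $H=D(T^2)\subseteq D(T)$ and hence $D(T)=H$; the closed graph Theorem would then make $T$ bounded, contradicting the assumption that $T$ is unbounded.

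With these hypotheses in hand, Proposition \ref{scalarperturbAM} applied to $T^2$ with $\alpha=1$ yields $T^2+I\in\mathcal{AM}_{c}(H)$, and then the equivalence (\ref{positiveAM})$\Leftrightarrow$(\ref{AMplusidentity}) of Theorem \ref{equivalentAMs} gives $T\in\mathcal{AM}_{c}(H)$, completing the argument. I do not expect a genuine obstacle here: the only mildly non-formal step is confirming that $T^2$ is unbounded, and everything else is a direct invocation of the two cited results. It is worth noting that, combined with Proposition \ref{squareproperty}, this establishes the full equivalence $T\in\mathcal{AM}_{c}(H)\Leftrightarrow T^2\in\mathcal{AM}_{c}(H)$ for positive unbounded $T$.
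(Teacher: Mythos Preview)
Your proof is correct and follows essentially the same route as the paper: both arguments first invoke Proposition~\ref{scalarperturbAM} (applied to $T^2$) to obtain $T^2+I\in\mathcal{AM}_{c}(H)$, and then pass from this to $T\in\mathcal{AM}_{c}(H)$. The only difference is cosmetic: you cite the equivalence (\ref{positiveAM})$\Leftrightarrow$(\ref{AMplusidentity}) of Theorem~\ref{equivalentAMs} directly, whereas the paper unpacks that very implication by factoring $T^2+I=(T+iI)^*(T+iI)$ and appealing to \cite[Theorem~4.11]{SHKGRAM1} together with Theorem~\ref{complexperturb}. Your explicit check that $T^2$ is unbounded (needed for Proposition~\ref{scalarperturbAM}) is a point the paper leaves implicit.
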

  \begin{proof}
  If $T^2\in \mathcal{AM}_{c}(H)$, then $T^2+I\in \mathcal{AM}_{c}(H)$ by Proposition \ref{scalarperturbAM}. But we have $T^2+I=(T-iI)(T+iI)=(T+iI)^*(T+iI)\in \mathcal{AM}_{c}(H)$. Since $T+iI$ is one-to-one, by \cite[Theorem 4.11]{SHKGRAM1}, we have $T+iI\in \mathcal{AM}_{c}(H)$. Next, by Theorem \ref{complexperturb}, we can conclude that $T\in \mathcal{AM}_{c}(H)$.
  \end{proof}

\begin{corollary}
  Let $T\in \mathcal C(H_1,H_2)$ be densely defined and unbounded. Then $T\in \mathcal{AM}_{c}(H_1,H_2)$ if and only if $T^*T\in \mathcal {AM}_{c}(H_1)$.
\end{corollary}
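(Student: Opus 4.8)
The plan is to reduce the statement to the positive case already settled by Proposition \ref{squareproperty} and Corollary \ref{sqrtproperty}, using the modulus $|T|=(T^*T)^{\frac{1}{2}}$ as the bridge. The guiding observation is that the absolutely minimum attaining property of $T$ is controlled entirely by $|T|$, while passing from $|T|$ to its square is exactly passing from $|T|$ to $T^*T$. So the whole equivalence should factor as $T \leftrightarrow |T| \leftrightarrow |T|^2 = T^*T$.

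First I would record that $T\in \mathcal{AM}_{c}(H_1,H_2)$ if and only if $|T|\in \mathcal{AM}_{c}(H_1)$. This is immediate from the definitions, since $D(|T|)=D(T)$ and $\|Tx\|=\||T|x\|$ for every $x\in D(T)$. Consequently, for each closed subspace $M\subseteq H_1$ the restrictions $T|_{M}$ and $|T||_{M}$ share the domain $M\cap D(T)$ and satisfy $\|T|_{M}x\|=\||T||_{M}x\|$; hence $m(T|_{M})=m(|T||_{M})$ and $T|_{M}$ attains its minimum modulus precisely when $|T||_{M}$ does. This yields the first equivalence as a genuine ``if and only if''.

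Next, because $T$ is unbounded and $\|Tx\|=\||T|x\|$ on $D(T)$, the positive operator $|T|$ is unbounded, and therefore so is $|T|^2=T^*T$. Applying Proposition \ref{squareproperty} and Corollary \ref{sqrtproperty} to the unbounded positive operator $|T|$ gives $|T|\in \mathcal{AM}_{c}(H_1)$ if and only if $|T|^2=T^*T\in \mathcal{AM}_{c}(H_1)$. Chaining this with the first step produces the desired equivalence between $T\in \mathcal{AM}_{c}(H_1,H_2)$ and $T^*T\in \mathcal{AM}_{c}(H_1)$.

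The only point requiring attention is checking the hypotheses of the invoked results, namely that $|T|$ (and hence $T^*T$) is genuinely unbounded, so that the unbounded positive versions of the square and square-root stability statements apply; I expect this, rather than any serious computation, to be the main thing to verify. It follows directly from the unboundedness of $T$ together with the isometric relation $\|Tx\|=\||T|x\|$ on $D(T)=D(|T|)$.
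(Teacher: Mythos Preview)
Your proof is correct and follows essentially the same approach as the paper's: reduce to $|T|$ via the isometric relation $\|Tx\|=\||T|x\|$, then invoke Proposition~\ref{squareproperty} and Corollary~\ref{sqrtproperty} to pass between $|T|$ and $|T|^2=T^*T$. If anything, your version is more careful, since you explicitly verify the unboundedness of $|T|$ (and hence of $T^*T$) needed to apply those results, a hypothesis the paper's proof uses tacitly.
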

\begin{proof}
We know that $T\in \mathcal {AM}_{c}(H_1,H_2)$ if and only if $|T|\in \mathcal{AM}_{c}(H_1)$. By Corollary \ref{sqrtproperty}, this is equivalent to the fact $T^*T=|T|^2\in \mathcal{AM}_c(H_1)$. The converse follows by Proposition \ref{squareproperty}.
\end{proof}
%
%  \begin{corollary}
%  Let $T\in \mathcal C(H_1,H_2)$ be densely defined. Then $T\in \mathcal{AM}_{c}(H_1,H_2)$ if and only if $T^*T\in \mathcal{AM}_{c}(H_1)$.
%
%  %\item if $T\in \mathcal{AM}_{c}(H_1,H_2)$, then exactly one of $R(T)$ and $N(T)$ is finite-dimensional
%
%
%  \end{corollary}
%  \begin{proof}
%  If $T$ is not a finite rank operator, then $|T|$ is not a finite rank operator. Hence  by Corollary \ref{finitedimnullspace}, we have $N(|T|)=N(T)$ is finite-dimensional. If $N(T)$ is finite-dimensional, then $N(|T|)$ is finite-dimensional. Hence $|T|$ cannot be finite rank by Theorem \ref{finiterankchar}, hence $T$ cannot be finite rank operator.
%
%  To prove the second statement, we know that $T\in \mathcal{AM}_{c}(H_1,H_2)$ if and only if $|T|\in \mathcal{AM}_{c}(H_1)$. But this is equivalent to the fact that $|T|^2=T^*T\in \mathcal{AM}_{c}(H_1)$ by Propositions \ref{sqrtproperty} and \ref{squareproperty}.
%    \end{proof}

\begin{theorem}
  Let $T\in \mathcal C(H)$ be positive. Then $T\in \mathcal K(H)$ if and only if $Z_{T}\in \mathcal K(H)$.
\end{theorem}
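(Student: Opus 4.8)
The plan is to prove the two implications separately, treating the forward direction as a routine consequence of the ideal property of compact operators and reserving the real work for the converse, where the boundedness machinery already developed must be invoked.

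For the forward implication, I would assume $T\in\mathcal K(H)$. Since every compact operator is bounded, $T\in\mathcal B(H)$ is positive, so $(I+T^2)^{-\frac12}$ is a bounded positive operator. As $Z_T=T(I+T^2)^{-\frac12}$ is the product of the compact operator $T$ with a bounded operator, and $\mathcal K(H)$ is a two-sided ideal in $\mathcal B(H)$, I conclude $Z_T\in\mathcal K(H)$. This direction requires no further input.

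For the converse, suppose $Z_T\in\mathcal K(H)$. The first and decisive step is to deduce that $T$ is bounded. Because $T$ is positive and self-adjoint, $(Z_T)^*=Z_{T^*}=Z_T$, and $Z_T=T(I+T^2)^{-\frac12}$ is a product of commuting positive operators, hence $Z_T$ is a positive self-adjoint operator with $\|Z_T\|\le 1$. A positive compact operator attains its norm at a top eigenvector, so there is a unit vector $x_0$ with $Z_T^*Z_Tx_0=\|Z_T\|^2x_0$; thus $Z_T$ is norm attaining. Lemma \ref{normattainingbddtransform} then yields $T\in\mathcal B(H)$.

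With $T$ now bounded and positive, $(I+T^2)^{\frac12}$ is a bounded operator commuting with $T$, and the functional calculus gives
\begin{equation*}
  T=Z_T(I+T^2)^{\frac12}.
\end{equation*}
Hence $T$ is the product of the compact operator $Z_T$ with a bounded operator, so $T\in\mathcal K(H)$, completing the proof. The hard part is precisely the order of operations in the converse: the identity $T=Z_T(I+T^2)^{\frac12}$ is of no use until $(I+T^2)^{\frac12}$ is known to be bounded, since composing a compact operator with an unbounded one need not produce a compact (or even bounded) operator. It is the compactness of $Z_T$ forcing norm attainment, combined with Lemma \ref{normattainingbddtransform}, that first secures boundedness of $T$ and thereby unlocks the final algebraic step.
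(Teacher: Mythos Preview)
Your proof is correct and follows essentially the same route as the paper's: both establish the forward direction via the ideal property of $\mathcal K(H)$, and for the converse both use that compactness of $Z_T$ forces norm attainment, then invoke Lemma~\ref{normattainingbddtransform} to get $T\in\mathcal B(H)$, and finally write $T=Z_T(I+T^2)^{1/2}$. Your version simply spells out in more detail why a compact operator is norm attaining, whereas the paper states this in one line.
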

\begin{proof}
Assume that  $T\in \mathcal K(H)$. Since $(I+T^{2})^{-\frac{1}{2}}\in \mathcal B(H)$, we get that $Z_{T}=T(I+T^2)^{-\frac{1}{2}}\in \mathcal K(H)$. Next assume that $Z_{T}\in \mathcal K(H)$. That is $Z_{T}$ is norm attaining. Hence by Lemma \ref{normattainingbddtransform}, it is clear that $T\in \mathcal B(H)$. Hence $(I + T^2)^{1/2} \in B(H)$. Thus
$T = Z_T(I + T^2)^{1/2} \in \mathcal{K}(H)$.
%Next, let ${\{\mu_n:n\in \mathbb N}\}\cup {\{0}\}$ be the spectrum of $Z_{T}^*Z_{T}$. Note that each $\mu_n$ has finite multiplicity. By Lemma \ref{normattainingbddtransform}, we can conclude that $\mu_n<1$ for each $n\in \mathbb N$. We can easily verify that $\sigma(T^*T)={\{\lambda_n}\}\cup {\{0}\}$, where $\lambda_n=\dfrac{\mu_n}{\sqrt{1-\mu_n^2}}$ for each $n\in \mathbb N$. Since $\lambda_n\rightarrow 0$ as $n\rightarrow \infty$, it follows that $T^*T\in \mathcal K(H)$ and hence $T\in \mathcal K(H)$.
\end{proof}

The following result is proved in \cite[Theorem 4.16]{SHKGRAM1}. Here we give a simple proof in a particular case.
\begin{proposition}\label{compactmpi}
 Let $T\in \mathcal C(H)$ be positive and unbounded. If $T\in \mathcal{AM}_{c}(H)$ then $N(T)$ is finite-dimensional and $T^{\dagger}\in \mathcal K(H)$.
\end{proposition}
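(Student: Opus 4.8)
The plan is to deduce the finite-dimensionality of $N(T)$ from an earlier result and then to exhibit $T^\dagger$ explicitly using the spectral data supplied by Theorem \ref{posAMstructure}. The finite-dimensionality of $N(T)$ is precisely the content of Theorem \ref{finiterankchar} (applied to $T$ itself, which is positive, unbounded and in $\mathcal{AM}_{c}(H)$), so the first assertion needs no further argument. For the second assertion I would invoke Theorem \ref{posAMstructure} to obtain an unbounded increasing sequence $\{\lambda_n\}$ of strictly positive eigenvalues of $T$, together with orthonormal eigenvectors $\{\phi_n\}$ satisfying $\overline{\text{span}}\{\phi_n:n\in\mathbb N\}=N(T)^\bot$ and $Tx=\sum_n\lambda_n\langle x,\phi_n\rangle\phi_n$ on $D(T)$; note each $\lambda_n>0$ since $\phi_n\in N(T)^\bot$.

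Next I would identify $T^\dagger$ directly from its defining properties. Because $T$ is positive, it is self-adjoint, so $R(T)^\bot=N(T^*)=N(T)$ and $\overline{R(T)}=N(T)^\bot$; moreover $R(T)$ is closed, as $T\in\mathcal{AM}_{c}(H)$. Hence $D(T^\dagger)=R(T)\oplus^\bot R(T)^\bot=H$, the operator $T^\dagger$ vanishes on $N(T)=N(T^\dagger)$, and on each eigenvector it acts by $T^\dagger\phi_n=\lambda_n^{-1}\phi_n$: indeed $\lambda_n^{-1}\phi_n\in C(T)$ solves $Tx=\phi_n$ exactly and therefore is the least-squares solution of minimal norm. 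Combining these observations I would record
\begin{equation*}
T^\dagger y=\sum_{n=1}^{\infty}\lambda_n^{-1}\langle y,\phi_n\rangle\phi_n,\qquad y\in H,
\end{equation*}
the identity being valid for every $y$ since the $N(T)$-component of $y$ contributes nothing to the series, and the series converges because $\lambda_n^{-1}\le\lambda_1^{-1}$ and $\sum_n|\langle y,\phi_n\rangle|^2\le\|y\|^2$.

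Finally I would establish compactness by the standard truncation argument. Setting $T_N^\dagger y=\sum_{n=1}^{N}\lambda_n^{-1}\langle y,\phi_n\rangle\phi_n$, each $T_N^\dagger$ has finite rank, and since $\{\lambda_n\}$ is increasing we have $\|T^\dagger-T_N^\dagger\|=\sup_{n>N}\lambda_n^{-1}=\lambda_{N+1}^{-1}$. As $\{\lambda_n\}$ is unbounded, $\lambda_{N+1}^{-1}\to 0$, so $T^\dagger$ is a norm limit of finite-rank operators and hence $T^\dagger\in\mathcal K(H)$.

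I do not anticipate a serious obstacle, since all the hard structural work is already packaged in Theorem \ref{posAMstructure}. The only step requiring genuine care is the verification that $T^\dagger$ coincides with the diagonal operator written above, i.e. matching the Moore-Penrose characterization (least-squares solution of minimal norm, together with $N(T^\dagger)=R(T)^\bot$) against the spectral action of $T$. This matching is exactly where the closedness of $R(T)$ is used, guaranteeing $D(T^\dagger)=H$ and that $T^\dagger\phi_n=\lambda_n^{-1}\phi_n$ rather than merely an approximate solution.
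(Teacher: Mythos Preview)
Your proof is correct, but it follows a different path from the paper's. The paper argues by block decomposition: once $N(T)$ is known to be finite-dimensional, one writes $T=0\oplus T_0$ with $T_0=T|_{N(T)^\bot}$ injective, observes that $T_0\in\mathcal{AM}_c(N(T)^\bot)$, and then invokes the already-established invertible case (citing \cite[Theorem 4.10]{SHKGRAM1} and Theorem~\ref{inverseAN}) to conclude $T_0^{-1}\in\mathcal K(N(T)^\bot)$, whence $T^\dagger=0\oplus T_0^{-1}$ is compact. Your route instead pulls the full spectral data from Theorem~\ref{posAMstructure}, writes down $T^\dagger$ explicitly as the diagonal operator $y\mapsto\sum_n\lambda_n^{-1}\langle y,\phi_n\rangle\phi_n$, and proves compactness by a hands-on finite-rank truncation using $\lambda_n\to\infty$. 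The paper's argument is shorter and more modular, leaning on an external reference for the injective case; yours is more self-contained and transparent, since the compactness is visible directly from the eigenvalue asymptotics rather than routed through the $\mathcal{AN}$-characterization. One small point: your appeal to ``$R(T)$ is closed, as $T\in\mathcal{AM}_c(H)$'' is exactly \cite[Proposition 4.2]{SHKGRAM1}, though it can equally be read off from the spectral representation itself (every $y\in N(T)^\bot$ is $T$ of $\sum_n\lambda_n^{-1}\langle y,\phi_n\rangle\phi_n\in D(T)$).
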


\begin{proof}
Clearly $N(T)$ is finite-dimensional.  If $T$ is one-to-one, then the conclusion follows by \cite[Theorem 4.10]{SHKGRAM1}. Next, assume that $N(T)\neq {\{0}\}$. Then $T$ has the representation
$\left(
   \begin{array}{cc}
     0 & 0 \\
     0 & T_0 \\
   \end{array}
 \right)
$, where $T_0=T|_{N(T)^{\bot}}$. Clearly, $T_0\in \mathcal{AM}_{c}(N(T)^{\bot})$. Hence $T_0^{-{1}}\in \mathcal{K}(N(T)^{\bot})$, by Theorem \ref{inverseAN}. Hence $T^{\dagger}=0\oplus T_0^{-1}$ is compact.

\end{proof}

Next we illustrate the above result with an example.
\begin{example}\label{counterexmpcompact}
  Let $A$ be a linear operator with $$D(A)={\{(x_n)\in \ell^2:(nx_n)\in \ell^2}\}.$$ Define $A$ by
  \begin{equation*}
    A(x_1,x_2,x_3,\dots)=(x_1,2x_2,3x_3,\dots),\; (x_n)\in \ell^2.
  \end{equation*}
  It can be shown that $A$ is a densely defined closed operator. In fact, $A$ is positive and invertible with $A^{-1}:\ell^2\rightarrow \ell^2$ given by
  \begin{equation*}
    A^{-1}(y_1,y_2,y_3,\dots)=(y_1,\frac{y_2}{2},\frac{y_3}{3},\dots),\; (y_n)\in \ell^2.
  \end{equation*}
  Now define $T$ on $D:=\ell^2\oplus D(A)\subset \ell^2\oplus \ell^2$ by  $T=0\oplus A$. Then $T^{\dagger}=0\oplus A^{-1}$. It can be easily seen that $N(T)=\ell^2\oplus {\{0}\}$ which is infinite-dimensional. Also, $T^{\dagger}\in \mathcal K(H)$. We claim that $T\notin \mathcal{AM}_{c}(H)$. It is clear that $N(T)$ is a reducing subspace for $T$. It can be easily shown that $Z_{T}=0\oplus Z_{A}$. A routine computation shows that
  $Z_{A}$ is defined on the standard orthonormal basis ${\{e_n:n\in \mathbb N}\}$ of $\ell^2$ by
  \begin{equation*}
  Z_{A}(e_n)=\dfrac{n}{\sqrt{1+n^2}}e_n,n\in \mathbb N.
   \end{equation*}
   Note that $1$ is the limit point of the point spectrum $$\sigma_p(Z_{A})={\{\dfrac{n}{\sqrt{1+n^2}}:n\in \mathbb N}\}.$$ Hence $\sigma_{ess}(Z_{A})={\{1}\}$ and since $0$ is an eigenvalue of $Z_{T}$ with infinite multiplicity, we can conclude that $0\in \sigma_{ess}(T)$. So $\sigma_{ess}(Z_{T})={\{0,1}\}$. By \cite[Theorem 3.10]{NBGR2}, $Z_{T}\notin \mathcal{AM}(H)$. Now, by Theorem \ref{equivalentAMs}, we can conclude that $T\notin \mathcal{AM}_{c}(H)$.
\end{example}

\begin{proposition}\label{cptmpiAM}
  Let $T\in \mathcal {AM}_{c}(H_1,H_2)$ be densely defined and unbounded. Then $N(T)$ is finite-dimensional and $T^{\dagger}\in \mathcal K(H_2,H_1)$.
\end{proposition}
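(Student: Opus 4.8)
The plan is to dispose of the statement about $N(T)$ first and then reduce the compactness of $T^{\dagger}$ to the positive case already settled in Proposition \ref{compactmpi}. That $N(T)$ is finite-dimensional is precisely the content of Corollary \ref{finitedimnullspace}, so no new argument is needed there; the work lies entirely in the Moore-Penrose inverse.

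For $T^{\dagger}$ I would pass through the polar decomposition $T=V|T|$, where $|T|=(T^{*}T)^{1/2}$ and $V$ is the partial isometry with initial space $N(T)^{\bot}=\overline{R(|T|)}$ and final space $\overline{R(T)}$. Since $\|Tx\|=\||T|x\|$ for every $x\in D(T)=D(|T|)$, the minimum modulus of any restriction $T|_{M}$ equals that of $|T|\,|_{M}$ and the two restrictions are minimized at the same vectors; hence $T\in\mathcal{AM}_{c}(H_1,H_2)$ forces $|T|\in\mathcal{AM}_{c}(H_1)$, as is used repeatedly in the preceding results. The operator $|T|$ is positive, and it is unbounded, for otherwise $\||T|x\|=\|Tx\|$ would make $T$ bounded. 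Thus Proposition \ref{compactmpi} applies to $|T|$ and gives $|T|^{\dagger}\in\mathcal{K}(H_1)$; in particular $|T|^{\dagger}$ is everywhere defined.

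The key step is the factorization $T^{\dagger}=|T|^{\dagger}V^{*}$. I would establish it by showing that the bounded operator $S:=|T|^{\dagger}V^{*}\in\mathcal{B}(H_2,H_1)$ agrees with $T^{\dagger}$ on $D(T^{\dagger})=R(T)\oplus^{\bot}R(T)^{\bot}$. For $y=Tx\in R(T)$ one has $V^{*}y=V^{*}V|T|x=P_{N(T)^{\bot}}|T|x=|T|x\in R(|T|)$, so $Sy=|T|^{\dagger}|T|x=P_{N(T)^{\bot}}x=T^{\dagger}Tx=T^{\dagger}y$, while both $S$ and $T^{\dagger}$ annihilate $R(T)^{\bot}=N(T^{\dagger})$ because $N(V^{*})=R(V)^{\bot}=R(T)^{\bot}$. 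Hence $T^{\dagger}\subseteq S$. Since $S$ is bounded and $T^{\dagger}$ is closed, $D(T^{\dagger})$ is closed; being also dense in $H_2$ it must be all of $H_2$, so $T^{\dagger}=S$. As $|T|^{\dagger}$ is compact and $V^{*}$ is bounded, this yields $T^{\dagger}=|T|^{\dagger}V^{*}\in\mathcal{K}(H_2,H_1)$.

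The main obstacle I anticipate is the rigorous justification of $T^{\dagger}=|T|^{\dagger}V^{*}$ in the unbounded setting, which is purely a domain-tracking matter: one must confirm that $V^{*}$ carries $D(T^{\dagger})$ into $D(|T|^{\dagger})=R(|T|)\oplus^{\bot}N(|T|)$, and this follows from $V^{*}R(T)=R(|T|)$ together with $N(|T|)^{\bot}=\overline{R(|T|)}$ computed above. Once the domains are matched, the finite-dimensional singular value heuristic $T=U\Sigma W^{*}\Rightarrow T^{\dagger}=W\Sigma^{\dagger}U^{*}$ makes the identity transparent, and the compactness conclusion is then immediate from the compactness of $|T|^{\dagger}$.
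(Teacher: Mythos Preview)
Your argument is correct and follows the same strategy as the paper: quote Corollary~\ref{finitedimnullspace} for $\dim N(T)<\infty$, pass to $|T|\in\mathcal{AM}_c(H_1)$, apply Proposition~\ref{compactmpi} to obtain $|T|^{\dagger}\in\mathcal K(H_1)$, and then transfer compactness to $T^{\dagger}$. The only difference is in this last transfer step: the paper invokes the identity $|T|^{\dagger}=|(T^{\dagger})^{*}|$ (quoted from \cite[Proposition~3.19]{SHKGRAM1}) and concludes immediately, whereas you verify the polar-type factorization $T^{\dagger}=|T|^{\dagger}V^{*}$ by hand on $R(T)\oplus^{\bot}R(T)^{\bot}$. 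Your route is slightly longer but more self-contained, since it avoids citing the external identity; the paper's route is a one-liner once that identity is granted. Either way the content is the same, and your domain-tracking for the closedness of $D(T^{\dagger})$ is sound (in fact, once $|T|^{\dagger}$ is bounded one already has $R(|T|)$ closed, hence $R(T)$ closed and $D(T^{\dagger})=H_2$ directly, so the closure argument can be shortened).
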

\begin{proof}
It is clear that $N(T)$ is finite-dimensional by  Corollary \ref{finitedimnullspace}. As $T\in \mathcal{AM}_{c}(H_1,H_2)$, we have $|T|\in \mathcal{AM}_{c}(H_1)$. This implies that $|T|^{\dagger}=|(T^*)^{\dagger}|\in \mathcal K(H_1)$. From this we can conclude that $T^{\dagger}\in \mathcal K(H_2,H_1)$.
\end{proof}

The following question is asked in \cite[Question 4.17]{SHKGRAM1}.
\begin{question}\label{Qnmpicpt}
Let $T\in \mathcal C(H)$ be densely defined, unbounded and $T^{\dagger}\in \mathcal K(H)$. Is it true that $T\in \mathcal{AM}_{c}(H)$?.
\end{question}
The Example \ref{counterexmpcompact} shows  that the answer to  Question \ref{Qnmpicpt} need not be affirmative. But we have an affirmative answer in a particular case.

\begin{proposition}\label{poscptgeninv}
Let $T\in \mathcal C(H)$ be positive, unbounded and $N(T)$ be finite-dimensional. If $T^{\dagger}\in \mathcal{K(H)}$, then $T\in \mathcal{AM}_{c}(H)$.
\end{proposition}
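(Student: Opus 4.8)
The plan is to reduce to the injective part of $T$, apply Theorem \ref{inverseAN}, and then transfer the conclusion back to $H$ using the bounded transform exactly as in the proof of Proposition \ref{squareproperty}.

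First I would set up the reduction. Since $T$ is positive it is self-adjoint, so its finite-dimensional null space $N(T)$ is a reducing subspace, and with respect to the decomposition $H=N(T)\oplus N(T)^{\bot}$ we may write $T=0\oplus T_0$, where $T_0=T|_{N(T)^{\bot}}$ is positive and one-to-one. Since $T$ is unbounded and $N(T)$ is finite-dimensional, $T_0$ is again unbounded. The hypothesis $T^{\dagger}\in\mathcal K(H)$ in particular makes $T^{\dagger}$ a bounded, everywhere-defined operator, which forces $D(T^{\dagger})=R(T)\oplus^{\bot}R(T)^{\bot}=H$, that is, $R(T)$ is closed; as $T$ is positive this gives $R(T)=\overline{R(T)}=N(T)^{\bot}$. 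Hence $T_0\colon N(T)^{\bot}\to N(T)^{\bot}$ is a bijection, and the Moore-Penrose identities show $T^{\dagger}=0\oplus T_0^{-1}$, so that $T_0^{-1}$ is the compression of the compact operator $T^{\dagger}$ and is therefore itself compact (hence bounded and positive).

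Next I would show $T_0\in\mathcal{AM}_{c}(N(T)^{\bot})$. Since $T_0^{-1}$ is a positive compact operator, it is absolutely norm attaining: taking $\alpha=0$ and $F=0$ in the characterization \cite[Theorem 2.5]{VNGR1} exhibits $T_0^{-1}$ in the required form, so $T_0^{-1}\in\mathcal{AN}(N(T)^{\bot})$. (Alternatively, for any closed $M\subseteq N(T)^{\bot}$ the operator $P_M\,T_0^{-2}\,P_M|_M=(T_0^{-1}|_M)^{*}(T_0^{-1}|_M)$ is compact and positive, hence attains its norm, whence $T_0^{-1}|_M$ is norm attaining.) Since $T_0^{-1}\in\mathcal B(N(T)^{\bot})$, Theorem \ref{inverseAN} applied to $T_0$ yields $T_0\in\mathcal{AM}_{c}(N(T)^{\bot})$. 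If $N(T)=\{0\}$ this already finishes the proof, since then $T=T_0$.

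Finally, when $N(T)\neq\{0\}$, I would lift the conclusion to $H$ by the bounded transform argument used in Proposition \ref{squareproperty}. As $T_0$ is positive, unbounded and in $\mathcal{AM}_{c}(N(T)^{\bot})$, Theorem \ref{equivalentAMs} gives $Z_{T_0}\in\mathcal{AM}(N(T)^{\bot})$. Because $N(T)$ reduces $T$, the bounded transform splits as $Z_{T}=0\oplus Z_{T_0}$, and $N(Z_{T})=N(T)$ is finite-dimensional; hence \cite[Proposition 3.6]{NBGR2} gives $Z_{T}\in\mathcal{AM}(H)$. Applying Theorem \ref{equivalentAMs} once more, now to the positive unbounded operator $T$, we conclude $T\in\mathcal{AM}_{c}(H)$. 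The steps are all routine given the machinery already developed; the point requiring the most care is the very first reduction, namely verifying that compactness of $T^{\dagger}$ forces $R(T)$ to be closed so that $T_0$ is genuinely bijective and Theorem \ref{inverseAN} is applicable, together with the identification $T^{\dagger}=0\oplus T_0^{-1}$. Once the injective part is handled, the transfer from $N(T)^{\bot}$ back to $H$ is exactly the finite-dimensional-kernel bookkeeping already carried out in Proposition \ref{squareproperty}.
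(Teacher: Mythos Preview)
Your proof is correct and follows essentially the same route as the paper: reduce to the injective part $T_0$ via the decomposition $T=0\oplus T_0$, use compactness of $T_0^{-1}$ to get $Z_{T_0}\in\mathcal{AM}(N(T)^{\bot})$, then lift to $Z_T\in\mathcal{AM}(H)$ via \cite[Proposition~3.6]{NBGR2} and conclude with Theorem~\ref{equivalentAMs}. Your write-up is in fact more careful than the paper's, which glosses over why $R(T)$ is closed (hence why $T_0^{-1}$ exists as a genuine bounded inverse) and compresses the passage from $T_0^{-1}\in\mathcal{K}$ to $Z_{T_0}\in\mathcal{AM}$ into a single line; you correctly unpack this via Theorem~\ref{inverseAN} and Theorem~\ref{equivalentAMs}.
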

\begin{proof}
Since $N(T)$ reduces $T$, with respect to the subspaces $N(T)$ and $N(T)^{\bot}$, we can write $T$ by $T=0\oplus T_{0}$, where $T_0=T|_{N(T)^{\bot}}$. As $N(T)=N(Z_T)$, we have that $Z_T=0\oplus Z_{T_0}$. As $T_0^{-1}\in  \mathcal K(N(T)^{\bot})$, we can conclude that $Z_{T_0}\in \mathcal K(N(T)^{\bot})$. As $N(T)$ is finite-dimensional, by \cite[Proposition 3.6]{NBGR2}, it follows that $Z_{T}\in \mathcal{AM}(H)$. Now by Theorem \ref{equivalentAMs}, we can conclude that $T\in \mathcal{AM}_{c}(H)$.
\end{proof}
\begin{proposition}\label{cptgeninvisAM}
Let $T\in \mathcal C(H_1,H_2)$ be densely defined, unbounded and $N(T)$ be finite-dimensional. If  $T^{\dagger}\in \mathcal K(H_2,H_1)$, then $T \in \mathcal {AM}(H_1,H_2)$.
\end{proposition}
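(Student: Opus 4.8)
The plan is to reduce the statement to the positive case settled in Proposition~\ref{poscptgeninv} by passing to the modulus $|T|$. Recall the reduction, used repeatedly above (for instance in Corollary~\ref{finitedimnullspace} and Proposition~\ref{cptmpiAM}), that $T\in\mathcal{AM}_c(H_1,H_2)$ if and only if $|T|\in\mathcal{AM}_c(H_1)$. Thus it suffices to check that the positive operator $|T|=(T^*T)^{1/2}$ meets the three hypotheses of Proposition~\ref{poscptgeninv}: it is unbounded, its null space is finite-dimensional, and its Moore--Penrose inverse is compact.

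First I would dispose of the two easy conditions. Since $D(|T|)=D(T)$ and $\||T|x\|=\|Tx\|$ for every $x\in D(T)$, unboundedness of $T$ forces unboundedness of $|T|$; and $N(|T|)=N(T)$ is finite-dimensional by hypothesis. The substance of the proof is the transfer of compactness from $T^\dagger$ to $|T|^\dagger$. For this I would invoke the two identities for Moore--Penrose inverses of densely defined closed operators that already appear above, namely $(T^*)^\dagger=(T^\dagger)^*$ and $|T|^\dagger=|(T^*)^\dagger|$ (the latter is exactly the identity used in the proof of Proposition~\ref{cptmpiAM}). Because $T^\dagger\in\mathcal K(H_2,H_1)$ is compact, its adjoint $(T^\dagger)^*=(T^*)^\dagger$ is compact, and the modulus of a compact operator is compact; hence $|T|^\dagger=|(T^*)^\dagger|\in\mathcal K(H_1)$.

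With all three hypotheses verified, Proposition~\ref{poscptgeninv} applies to the positive unbounded operator $|T|$ and yields $|T|\in\mathcal{AM}_c(H_1)$, whence $T\in\mathcal{AM}_c(H_1,H_2)$ by the reduction above. The step I expect to require the most care is the justification of the two generalized-inverse identities in the unbounded setting, together with the remark that $T^\dagger$, being compact and closed on the dense domain $R(T)\oplus^\bot R(T)^\bot$, is in fact everywhere defined and bounded, so that passing to its adjoint and its modulus is legitimate. Once these operator-theoretic facts are secured, the compactness transfer and the final appeal to Proposition~\ref{poscptgeninv} are immediate.
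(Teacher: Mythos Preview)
Your proposal is correct and follows essentially the same route as the paper: both reduce to the positive operator $|T|$, use the identity $|T|^{\dagger}=|(T^{\dagger})^{*}|$ (equivalently $|(T^{*})^{\dagger}|$, via $(T^{*})^{\dagger}=(T^{\dagger})^{*}$) to transfer compactness from $T^{\dagger}$ to $|T|^{\dagger}$, and then invoke Proposition~\ref{poscptgeninv}. Your write-up is somewhat more explicit about verifying the unboundedness of $|T|$ and the boundedness of $T^{\dagger}$, but the argument is the same.
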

\begin{proof}
If $T^{\dagger}\in \mathcal K(H_2,H_1)$, then $|T^{{\dagger}^*}|\in \mathcal K(H_2)$. But by \cite[Proposition 3.19]{SHKGRAM1}, we have $|T|^{\dagger}=|T^{{\dagger}^*}|$. Since $N(T)=N(|T|)$,  by Proposition \ref{poscptgeninv}, we have $|T|\in \mathcal{AM}_{c}(H_1)$. Hence $T\in \mathcal{AM}_{c}(H_1,H_2)$.
\end{proof}

By combining Propositions \ref{cptmpiAM} and \ref{cptgeninvisAM}, we can state the following theorem.

\begin{theorem}\label{mpicharacterization}
Let $T\in \mathcal C(H_1,H_2)$ be densely defined and unbounded. Then $T\in \mathcal {AM}_{c}(H_1,H_2)$ if and only if $N(T)$ is finite-dimensional and $T^{\dagger}\in \mathcal K(H_2,H_1)$.
\end{theorem}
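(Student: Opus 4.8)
The plan is to read off the biconditional directly from the two propositions that immediately precede the statement, each of which already supplies one implication in full generality; no independent argument beyond assembling them should be required.

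For the forward implication I would simply invoke Proposition \ref{cptmpiAM}: assuming $T\in \mathcal{AM}_{c}(H_1,H_2)$, that proposition yields at once that $N(T)$ is finite-dimensional and that $T^{\dagger}\in \mathcal{K}(H_2,H_1)$. For the reverse implication I would invoke Proposition \ref{cptgeninvisAM}: its standing hypotheses are exactly that $N(T)$ is finite-dimensional and $T^{\dagger}\in \mathcal{K}(H_2,H_1)$, and its conclusion is membership in $\mathcal{AM}_{c}(H_1,H_2)$. Chaining the two implications gives the stated equivalence with nothing further to verify.

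Consequently there is no genuine obstacle in the theorem itself, since the content has been front-loaded into the two propositions. If I had to point to the crux feeding this result, it is the reduction to the modulus $|T|$ and its bounded transform $Z_{|T|}$, together with the fact (drawn from \cite[Proposition 3.6]{NBGR2}) that adjoining or deleting a finite-dimensional null space preserves the $\mathcal{AM}$ property. That reduction is precisely what lets the compact-generalized-inverse condition be matched with the absolutely minimum attaining condition in both directions, and it is the step I would expect to do the real work were I proving the propositions from scratch rather than merely combining them here.
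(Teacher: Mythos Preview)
Your proposal is correct and matches the paper's approach exactly: the paper introduces Theorem~\ref{mpicharacterization} with the sentence ``By combining Propositions \ref{cptmpiAM} and \ref{cptgeninvisAM}, we can state the following theorem,'' and offers no further argument. Your identification of the forward implication with Proposition~\ref{cptmpiAM} and the reverse with Proposition~\ref{cptgeninvisAM} is precisely what the authors intend.
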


Next we ask whether the $\mathcal{AM}$-property of an operator imply the $\mathcal{AM}$-property of its adjoint. This question in the case of bounded operators is answered in \cite[Proposition 3.11]{NBGR2}. The unbounded case is discussed in \cite{SHKGRAM1} by assuming an extra condition, namely the invertibility of the operator.

\begin{theorem}\cite[Theorem 4.11]{SHKGRAM1}
If $T\in \mathcal C(H)$ is densely defined, unbounded and has a bounded inverse, then $T\in \mathcal{AM}_{c}(H)$ if and only if $T^*\in \mathcal {AM}_{c}(H)$.
\end{theorem}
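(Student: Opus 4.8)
The plan is to convert the $\mathcal{AM}_c$-property into a compactness statement about the inverse and then exploit the symmetry of compactness under taking adjoints. Since $T^{-1}\in \mathcal B(H)$, the operator $T$ is a bijection of $D(T)$ onto $H$, so $N(T)=\{0\}$ and $T^\dagger=T^{-1}$. The first task is the adjoint bookkeeping: I would record that $T^*$ is again densely defined and closed, that $(T^*)^{-1}=(T^{-1})^*\in \mathcal B(H)$ (which follows by taking adjoints in $TT^{-1}=I$ and $T^{-1}T=I|_{D(T)}$), and that $T^*$ is unbounded. The last point is where one must be slightly careful: if $T^*$ were bounded, then $T=T^{**}$ would be bounded, contradicting the hypothesis; hence $T^*$ is genuinely unbounded, and likewise $N(T^*)=\{0\}$ and $(T^*)^\dagger=(T^*)^{-1}=(T^{-1})^*$.

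With this in place, both $T$ and $T^*$ fall under the hypotheses of Theorem \ref{mpicharacterization}: they are densely defined, unbounded, with finite-dimensional (indeed trivial) kernels. That theorem then reads, for our operators, $T\in \mathcal{AM}_c(H)$ if and only if $T^{-1}=T^\dagger\in \mathcal K(H)$, and $T^*\in \mathcal{AM}_c(H)$ if and only if $(T^{-1})^*=(T^*)^\dagger\in \mathcal K(H)$.

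The proof then closes with the classical fact (Schauder's theorem) that a bounded operator is compact precisely when its adjoint is compact: $T^{-1}\in \mathcal K(H)$ if and only if $(T^{-1})^*\in \mathcal K(H)$. Chaining the three equivalences gives $T\in \mathcal{AM}_c(H)\iff T^{-1}\in \mathcal K(H)\iff (T^{-1})^*\in \mathcal K(H)\iff T^*\in \mathcal{AM}_c(H)$, as desired.

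I would flag the adjoint computation as the only genuine obstacle, since everything else is a direct application of cited results. An alternative, equally short route avoids Theorem \ref{mpicharacterization} and instead uses Theorem \ref{inverseAN}: one has $T\in \mathcal{AM}_c(H)\iff T^{-1}\in \mathcal{AN}(H)$ and $T^*\in \mathcal{AM}_c(H)\iff (T^{-1})^*\in \mathcal{AN}(H)$, after which one needs the adjoint-invariance of the class $\mathcal{AN}(H)$ for bounded operators. I prefer the compactness route because the adjoint-invariance of $\mathcal K(H)$ is elementary, whereas the adjoint-invariance of $\mathcal{AN}(H)$ would itself have to be invoked as a separate fact.
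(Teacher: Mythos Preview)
Your argument is correct. Note, however, that in the present paper this theorem is not proved at all: it is quoted verbatim from \cite[Theorem~4.11]{SHKGRAM1} as a known result, and the paper then immediately improves it by dropping the invertibility hypothesis. Your proof is, in fact, precisely the specialization to the invertible case of the paper's proof of that improvement: the paper argues $T\in\mathcal{AM}_c\Rightarrow T^\dagger\in\mathcal K\Rightarrow (T^*)^\dagger=(T^\dagger)^*\in\mathcal K\Rightarrow T^*\in\mathcal{AM}_c$ via Propositions~\ref{cptmpiAM} and~\ref{cptgeninvisAM}, which is exactly your route through Theorem~\ref{mpicharacterization} and Schauder's theorem. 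So while there is no ``paper's proof'' of this particular statement to compare against, your approach coincides with the one the authors adopt for the generalization.
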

Here we improve the above result by dropping the invertibility condition.
\begin{theorem}
Let $T\in \mathcal C(H_1,H_2)$ be densely defined. Assume that both $N(T)$ and $N(T^*)$ are finite-dimensional. Then $T\in \mathcal{AM}_{c}(H_1,H_2)$ if and only if $T^*\in \mathcal {AM}_{c}(H_2,H_1)$.
\end{theorem}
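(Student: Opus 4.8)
The plan is to reduce the whole equivalence to the generalized-inverse characterization of the $\mathcal{AM}_c$ property established in Theorem \ref{mpicharacterization}, exploiting the symmetry of the hypotheses in $T$ and $T^*$. First I would dispose of the bounded case: if $T$ is bounded then it is everywhere defined (a bounded closed operator has closed, hence full, domain), so $T^*\in\mathcal B(H_2,H_1)$ and $T^{**}=T$; thus $T$ is bounded if and only if $T^*$ is, and for bounded operators the desired equivalence is exactly \cite[Proposition 3.11]{NBGR2}. Hence I may assume that $T$, and therefore $T^*$, is unbounded.

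Under this assumption both $T\in\mathcal C(H_1,H_2)$ and $T^*\in\mathcal C(H_2,H_1)$ are densely defined, closed and unbounded, so Theorem \ref{mpicharacterization} applies to each. Since $N(T)$ is finite-dimensional by hypothesis, that theorem gives $T\in\mathcal{AM}_c(H_1,H_2)$ if and only if $T^\dagger\in\mathcal K(H_2,H_1)$; since $N(T^*)$ is finite-dimensional by hypothesis, it gives $T^*\in\mathcal{AM}_c(H_2,H_1)$ if and only if $(T^*)^\dagger\in\mathcal K(H_1,H_2)$. Thus the statement collapses to the single, symmetric claim that $T^\dagger$ is compact if and only if $(T^*)^\dagger$ is compact.

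The crux is therefore to relate the two generalized inverses. The direct route is the adjoint law $(T^*)^\dagger=(T^\dagger)^*$ for the Moore-Penrose inverse of a densely defined closed operator; granting it, compactness is invariant under adjunction, so $T^\dagger\in\mathcal K(H_2,H_1)$ if and only if $(T^\dagger)^*=(T^*)^\dagger\in\mathcal K(H_1,H_2)$, and the proof is complete. If one prefers to argue only from identities already in hand, the same conclusion follows from the modulus relations used earlier: Proposition \ref{cptmpiAM} records $|T|^\dagger=|(T^*)^\dagger|$ and Proposition \ref{cptgeninvisAM} records $|T|^\dagger=|(T^\dagger)^*|$, so $(T^*)^\dagger$ and $(T^\dagger)^*$ have the common modulus $|T|^\dagger$. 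Since an operator is compact exactly when its modulus is compact, $(T^*)^\dagger$ is compact if and only if $|T|^\dagger$ is compact if and only if $T^\dagger$ is compact.

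I expect the one genuinely delicate point to be the justification, in the unbounded setting, of these adjoint/modulus identities for the generalized inverse, where the domain bookkeeping matters: $D(T^\dagger)=R(T)\oplus^\bot R(T)^\bot$, and $T^\dagger$ is everywhere defined (hence eligible to be called compact) only once $R(T)$ is known to be closed, which is guaranteed on the $\mathcal{AM}_c$ side. Everything else is formal once that identity is secured, the equivalence being immediate from Theorem \ref{mpicharacterization} together with the interchangeability of the roles of $T$ and $T^*$.
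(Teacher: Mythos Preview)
Your proposal is correct and follows essentially the same route as the paper: both arguments reduce the equivalence to compactness of the Moore--Penrose inverse via Theorem~\ref{mpicharacterization} (the paper cites its constituent Propositions~\ref{cptmpiAM} and~\ref{cptgeninvisAM} separately) and then invoke the adjoint identity $(T^*)^\dagger=(T^\dagger)^*$ together with stability of compactness under adjoints. Your treatment is in fact slightly more careful than the paper's, since you explicitly dispose of the bounded case before applying the unbounded-only characterization, whereas the paper's proof tacitly assumes unboundedness when citing Proposition~\ref{cptmpiAM}.
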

\begin{proof}
First assume that $T\in \mathcal{AM}_{c}(H_1,H_2)$. Then by Proposition \ref{cptmpiAM}, $T^{\dagger}\in \mathcal K(H_2,H_1)$. This implies that
$(T^*)^{\dagger}=(T^{\dagger})^*\in \mathcal{K}(H_1,H_2)$. As $N(T^*)$ is finite-dimensional, by Proposition \ref{poscptgeninv}, we can conclude that $T^*\in \mathcal{AM}(H_2,H_1)$. The converse follows easily by applying the above argument to $T^*$ and using the hypothesis that $N(T)$ is finite-dimensional, we can conclude that $T\in \mathcal{AM}_{c}(H_1,H_2)$.
  \end{proof}

  %In fact, we can prove the following theorem.
%
%  \begin{theorem}
%    Let $T\in \mathcal C(H)$ be densely defined. The following statements are equivalent;
%    \begin{enumerate}
%    \item $N(T)$ and $N(T^*)$ are finite-dimensional
%    \item $\sigma(T^*T)=\sigma_{ess}(TT^*)$
%    \item $T,T^*\in \mathcal{AM}_{c}(H)$.
%    \end{enumerate}
%  \end{theorem}

Next we prove the spectral theorem for unbounded self-adjoint absolutely minimum attaining operators.
\begin{theorem}[Spectral theorem for self-adjoint $\mathcal{AM}$-operators]\label{selfadjequivalent}
  Let $H$ be an infinite-dimensional Hilbert space and $T\in \mathcal C(H)$ be self-adjoint. Assume that $T$ is unbounded. Then the following statements are equivalent.
  \begin{enumerate}
    \item\label{selfadjAM} $T\in \mathcal{AM}_{c}(H)$
    \item \label{selfadjMPinverse} $T^{\dagger} \in \mathcal K(H)$ and $N(T)$ is finite-dimensional
    \item \label{selfadjdiagonal} there exists a sequence $(\lambda_n)$ of real numbers and an orthonormal subset  ${\{v_n:n\in \mathbb N}\}$ of $H$ such that  $\displaystyle \lim_{n\rightarrow \infty}|\lambda_n|\rightarrow \infty$ and $Tv_n=\lambda_nv_n$ for each $n\in \mathbb N$. In this case, $D(T)=\left \{  x\in H: \displaystyle \sum_{n=1}^{\infty} \lambda_n^2|\langle x,v_n\rangle |^2<\infty \right \} $ and
 $$Tx=\displaystyle \sum_{n=1}^{\infty} \lambda_n \langle x,v_n\rangle v_n,\; \text{for all}\; x\in D(T)$$
    \item \label{selfadjpurespectrum} $T$ is diagonalizable, that is, $H$ has an orthonormal basis consisting of eigenvectors of $T$ and each eigenvalue of $T$ has finite multiplicity
    \item $\sigma(T) \subseteq \{0\} \cup \{\lambda_n: n \in \mathbb{N} \}$
and every spectral value is an eigenvalue with finite multiplicity, that is, $\sigma(T)=\sigma_d(T)$.
\item The resolvent $R_{\lambda}(T)=(T-\lambda I)^{-1}$ is compact for one, hence for all, $\lambda \in \rho(T )$.
    \item \label{selfadjimbedding} the embedding $J_{T}: (D(T),\|\cdot \|_{T})\rightarrow H$ is compact. Here $\|x\|_{T}=(\|x\|^2+\|Tx\|^2)^{\frac{1}{2}}$ for all $x\in D(T)$.
  \end{enumerate}
\end{theorem}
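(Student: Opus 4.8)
The plan is to treat Theorem \ref{mpicharacterization} as the logical hub. Applied with $H_1=H_2=H$ it yields the equivalence of (1) and (2) immediately, so it remains to show that condition (2)---a compact Moore--Penrose inverse together with a finite-dimensional kernel---is equivalent to each of the structural statements (3)--(7). The observation that makes this manageable for self-adjoint $T$ is that $N(T)=N(T^*)$ reduces $T$; I would therefore fix the orthogonal decomposition $H=N(T)\oplus N(T)^{\perp}$ and write $T=0\oplus T_0$, where $T_0=T|_{N(T)^{\perp}}$ is self-adjoint and injective, and correspondingly $T^{\dagger}=0\oplus T_0^{-1}$.

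To get (2)$\Rightarrow$(3),(4),(5): under (2) the kernel is finite-dimensional and $T_0^{-1}$ is compact, self-adjoint and injective, so the classical spectral theorem for compact self-adjoint operators supplies an orthonormal basis $\{v_n\}$ of $N(T)^{\perp}$ with $T_0^{-1}v_n=\mu_nv_n$, $\mu_n\neq 0$, $\mu_n\to 0$. Putting $\lambda_n:=\mu_n^{-1}$ gives $Tv_n=\lambda_nv_n$ with $|\lambda_n|\to\infty$, and the domain and action formulas of (3) follow by expanding an arbitrary vector in $\{v_n\}$ together with an orthonormal basis of $N(T)$. Adjoining such a basis of the finite-dimensional $N(T)$ gives an orthonormal basis of $H$ of eigenvectors, and since $|\lambda_n|\to\infty$ and $\dim N(T)<\infty$ every eigenvalue has finite multiplicity and the eigenvalues accumulate only at infinity; this is (4), and (5) is its reformulation $\sigma(T)=\sigma_d(T)$. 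Alternatively one can obtain the same expansion by applying Theorem \ref{posAMstructure} to $|T|$, but the direct reduction through $T_0^{-1}$ avoids reconciling the signs of the eigenvalues.

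For the converse and for (6),(7) I would argue as follows. From any of (3)--(5) the operator $T_0^{-1}$ is diagonal with eigenvalues $\lambda_n^{-1}\to 0$, hence compact, which returns (2). For the resolvent condition, $\rho(T)\neq\emptyset$ because $T$ is self-adjoint, and for $\lambda\in\rho(T)$ with $\lambda\neq 0$ one has $(T-\lambda I)^{-1}=(-\lambda)^{-1}I_{N(T)}\oplus(T_0-\lambda I)^{-1}$; the first summand is finite rank since $N(T)$ is finite-dimensional and the second is compact because it factors through $T_0^{-1}$, giving (2)$\Rightarrow$(6), while compactness of $(T-iI)^{-1}$ conversely forces $N(T)$ to be finite-dimensional and $T_0^{-1}$ to be compact, giving (6)$\Rightarrow$(2); the ``one, hence all'' clause is the usual resolvent-identity argument. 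Finally (6)$\Leftrightarrow$(7) is the standard equivalence between a compact resolvent and compactness of the graph-norm embedding $J_T$, which factors through $(T\pm iI)^{-1}$; I would cite this from \cite{schmudgen}, the same circle of ideas underlying Corollary \ref{noinfinitedmnlssp}.

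The main obstacle is bookkeeping of the kernel. The two inverses in play behave differently: the Moore--Penrose inverse $T^{\dagger}$ is blind to $N(T)$, whereas the resolvent $(T-\lambda I)^{-1}$ with $\lambda\neq 0$ sees it through the summand $(-\lambda)^{-1}I_{N(T)}$. Consequently ``$T^{\dagger}$ compact'' is strictly weaker than ``compact resolvent,'' and the finite-dimensionality of $N(T)$ in (2) is precisely the bridge between them. Example \ref{counterexmpcompact} is the warning case: there $T^{\dagger}$ is compact but $N(T)$ is infinite-dimensional, so the resolvent is not compact and $T\notin\mathcal{AM}_c(H)$. For this reason (3) must be read with its orthonormal system spanning a subspace of finite codimension (equivalently, with $0$ of finite multiplicity), so that the zero eigenspace does not smuggle back an infinite-dimensional kernel; keeping this consistent across (3)--(7) is where most of the care lies.
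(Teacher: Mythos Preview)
Your proposal is correct and follows essentially the same route as the paper: obtain (1)$\Leftrightarrow$(2) from the Moore--Penrose characterization, then split $T=0\oplus T_0$ with $T_0=T|_{N(T)^{\perp}}$ and run the remaining equivalences on the injective part. The only real difference is packaging: the paper dispatches the injective case by citing \cite[Theorem 4.18]{SHKGRAM1} as a black box, while you unfold it directly via the spectral theorem for the compact self-adjoint operator $T_0^{-1}$ and give explicit arguments for the resolvent and embedding conditions (6) and (7). Your version is therefore more self-contained, and your remark that (3) must be read with $\{v_n\}^{\perp}$ finite-dimensional (lest an infinite kernel, as in Example~\ref{counterexmpcompact}, slip through) addresses a point the paper's proof leaves implicit.
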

\begin{proof}
If $N(T)={\{0}\}$, then the result follows from \cite[Theorem 4.18]{SHKGRAM1}. Next assume that $N(T)\neq {\{0}\}$. Then the equivalence of (\ref{selfadjAM}) and (\ref{selfadjMPinverse}) follows by Propositions \ref{cptgeninvisAM} and \ref{cptmpiAM}. Since $N(T)$ is finite-dimensional and $N(T)$ reduces $T$, we have that $T=0\oplus T_0$, where $T_0=T_{N(T)^{\bot}}$, equivalence of the other statements follows by applying \cite[Theorem 4.18]{SHKGRAM1} to $T_0$ since $T_0$ is bijective and $T_0\in \mathcal{AM}_{c}(N(T)^{\bot})$.

To find $D(T)$ in (\ref{selfadjdiagonal}), we argue as follows; \\
$x\in D(T)$ if and only if $Tx\in H$ if and only if \begin{align*}
Tx&=\displaystyle \sum_{n=1}^{\infty}\langle Tx,v_n\rangle v_n\\
  &=\displaystyle \sum_{n=1}^{\infty}\langle x,Tv_n\rangle v_n\\
  &=\displaystyle \sum_{n=1}^{\infty}\lambda_n\langle x,v_n\rangle v_n.
  \end{align*}
  It is clear that $Tx\in H$ if and only if $\displaystyle \sum_{n=1}^{\infty}\lambda_n^2|\langle x,v_n\rangle|^2<\infty$. Hence $$D(T)=\left \{  x\in H: \displaystyle \sum_{n=1}^{\infty} \lambda_n^2|\langle x,v_n\rangle |^2<\infty \right \} $$ and $Tx=\displaystyle \sum_{n=1}^{\infty} \lambda_n \langle x,v_n\rangle v_n,\; \text{for all}\; x\in D(T).$
\end{proof}

It is natural to ask whether Theorem \ref{selfadjequivalent} holds true for normal operators. Here we answer this question.

\begin{theorem}[Spectral theorem for unbounded normal $\mathcal{AM}$-operators]\label{normaladjequivalent}
  Let $H$ be an infinite-dimensional Hilbert space and $T\in \mathcal C(H)$ be normal. Assume that $T$ is unbounded. Then the following statements are equivalent.
  \begin{enumerate}
    \item\label{normalAM} $T\in \mathcal{AM}_{c}(H)$
    \item \label{normalMPinverse} $T^{\dagger} \in \mathcal K(H)$ and $N(T)$ is finite-dimensional
    \item \label{normaldiagonal} there exists a sequence $(\lambda_n)$ of complex numbers and an orthonormal subset  ${\{v_n:n\in \mathbb N}\}$ of $H$ such that  $\lim_{n\rightarrow \infty}|\lambda_n|\rightarrow \infty$ and $Tv_n=\lambda_nv_n$ for each $n\in \mathbb N$. In this case, $D(T)=\left \{  x\in H: \displaystyle \sum_{n=1}^{\infty} \lambda_n^2|\langle x,v_n\rangle |^2<\infty \right \} $ and
 $$Tx=\displaystyle \sum_{n=1}^{\infty} \lambda_n \langle x,v_n\rangle v_n,\; \text{for all}\; x\in D(T)$$
    \item \label{normalpurespectrum} $T$ is diagonalizable, that is, $H$ has an orthonormal basis consisting of eigenvectors of $T$ and each eigenvalue of $T$ has finite multiplicity.
    \item \label{discretespectrum}$\sigma(T) \subseteq \{0\} \cup \{\lambda_n: n \in \mathbb{N} \}$ and every spectral value is an eigenvalue with finite multiplicity, that is, $\sigma(T)=\sigma_d(T)$.
    \item \label{cptresolvent} The resolvent $R_{\lambda}(T)=(T-\lambda I)^{-1}$ is compact for one, hence for all, $\lambda \in \rho(T)$.

    \item \label{normalimbedding} the embedding $J_{T}: (D(T),\|\cdot \|_{T})\rightarrow H$ is compact. Here $\|x\|_{T}=(\|x\|^2+\|Tx\|^2)^{\frac{1}{2}}$ for all $x\in D(T)$.
  \end{enumerate}
\end{theorem}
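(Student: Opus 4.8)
The plan is to reduce the normal case to the self-adjoint Theorem \ref{selfadjequivalent} applied to the modulus $|T| = (T^*T)^{\frac{1}{2}}$, and then to transfer the resulting diagonalization from $|T|$ to $T$ using the polar decomposition together with normality. First I would record the elementary facts that make the reduction possible. Since $N(T) = N(|T|)$ always, and $D(T) = D(|T|)$ with $\|Tx\|^2 = \langle T^*Tx, x\rangle = \||T|x\|^2$ for every $x \in D(T)$, the graph norms coincide: $\|x\|_T = \|x\|_{|T|}$. Consequently $J_T$ and $J_{|T|}$ are the same embedding, so statement (\ref{normalimbedding}) for $T$ is literally statement (\ref{selfadjimbedding}) for $|T|$; likewise $T \in \mathcal{AM}_c(H)$ if and only if $|T| \in \mathcal{AM}_c(H)$, and $N(T)$ is finite-dimensional if and only if $N(|T|)$ is. For normal $T$ we moreover have $N(T) = N(T^*)$, which I will use through the commutation property below.

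The equivalence of (\ref{normalAM}) and (\ref{normalMPinverse}) I would obtain directly from the general characterization Theorem \ref{mpicharacterization}, which requires no self-adjointness. Applying Theorem \ref{selfadjequivalent}, equivalently Theorem \ref{posAMstructure}, to the positive self-adjoint operator $|T|$ then yields an orthonormal eigenbasis of $N(T)^{\bot}$ for $|T|$ with eigenvalues $\mu_n \to \infty$, each of finite multiplicity, together with the compact-resolvent and compact-embedding statements for $|T|$.

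The heart of the argument is to promote this to a diagonalization of $T$ itself. Writing the polar decomposition $T = V|T|$, I would use the standard fact that for a normal operator the partial isometry $V$ commutes with $|T|$, and hence with every spectral projection of $|T|$; in particular $V$ leaves each finite-dimensional eigenspace $E_{\mu_n} = N(|T| - \mu_n I)$ invariant. For $\mu_n > 0$ we have $E_{\mu_n} \subseteq N(T)^{\bot}$, which is the initial space of $V$, so $V$ restricts to a unitary on the finite-dimensional space $E_{\mu_n}$. On $E_{\mu_n}$ one has $Tx = V|T|x = \mu_n Vx$, whence $T|_{E_{\mu_n}} = \mu_n V|_{E_{\mu_n}}$ is a normal operator on a finite-dimensional space and is therefore diagonalizable by the finite-dimensional spectral theorem, with eigenvalues of modulus $\mu_n$. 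Choosing an orthonormal eigenbasis of each $E_{\mu_n}$ and adjoining an orthonormal basis of the finite-dimensional space $N(T)$ (eigenvalue $0$) produces an orthonormal basis of eigenvectors $v_n$ of $T$ with eigenvalues $\lambda_n$ satisfying $|\lambda_n| = \mu_n \to \infty$; this is (\ref{normaldiagonal}), and the formula for $D(T)$ follows exactly as in the self-adjoint proof.

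Finally, the chain (\ref{normaldiagonal}) $\Rightarrow$ (\ref{normalpurespectrum}) $\Rightarrow$ (\ref{discretespectrum}) $\Rightarrow$ (\ref{cptresolvent}) is routine: the diagonalization gives the orthonormal eigenbasis with finite multiplicities; the condition $|\lambda_n| \to \infty$ forces every point of $\sigma(T)$ to be an isolated eigenvalue of finite multiplicity; and for $\lambda \in \rho(T)$ the resolvent is diagonal with eigenvalues $(\lambda_n - \lambda)^{-1} \to 0$, hence compact. To close the loop I would deduce (\ref{cptresolvent}) $\Rightarrow$ (\ref{normalMPinverse}) by noting that a compact resolvent forces discrete spectrum, so that $0$, if it lies in $\sigma(T)$, is an isolated eigenvalue of finite multiplicity and $T^{\dagger}$ is the norm limit of the finite-rank truncations of the diagonal operator $x \mapsto \sum_{\lambda_n \neq 0} \lambda_n^{-1}\langle x, v_n\rangle v_n$. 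The only genuinely new point beyond the self-adjoint theorem is the commutation $V|T| = |T|V$ and the passage to the finite-dimensional spectral theorem on each $E_{\mu_n}$; this is where I expect to have to argue most carefully, since it is precisely the step that exploits normality rather than self-adjointness.
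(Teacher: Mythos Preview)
Your approach is correct and takes a genuinely different route from the paper. The paper obtains $(\ref{normalMPinverse})\Rightarrow(\ref{normaldiagonal})$ by applying the spectral theorem for \emph{compact normal} operators directly to $T^{\dagger}$: the eigenvectors $\phi_n$ of $T^{\dagger}$ with eigenvalues $\mu_n$ satisfy $T\phi_n=\mu_n^{-1}\phi_n$, and the remaining equivalences are then handled one at a time (a Zorn's-lemma argument for $(\ref{discretespectrum})\Rightarrow(\ref{normalpurespectrum})$, the diagonal resolvent formula for $(\ref{discretespectrum})\Rightarrow(\ref{cptresolvent})$, and a reference to Schm\"udgen for $(\ref{cptresolvent})\Leftrightarrow(\ref{normalimbedding})$). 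Your reduction to $|T|$ is more systematic: once you note $J_T=J_{|T|}$ and $T\in\mathcal{AM}_c(H)\Leftrightarrow|T|\in\mathcal{AM}_c(H)$, all of the ``analytic'' conditions (\ref{normalAM}), (\ref{normalMPinverse}), (\ref{cptresolvent}), (\ref{normalimbedding}) transfer immediately from Theorem~\ref{selfadjequivalent}, and the diagonalization of $T$ comes from restricting $T$ to each finite-dimensional eigenspace $E_{\mu_n}$ of $|T|$ and invoking only the finite-dimensional spectral theorem. What your route buys is a tighter link to the self-adjoint case already proved and an explicit structural role for the polar decomposition; what the paper's route buys is brevity, since it quotes the compact-normal spectral theorem as a black box rather than rebuilding it eigenspace by eigenspace. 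Your one delicate step, the invariance $V(E_{\mu_n})\subseteq E_{\mu_n}$, can be verified by hand from $T^*T=TT^*$ (for $x\in E_{\mu_n}$ with $\mu_n>0$ one has $Vx=\mu_n^{-1}Tx$ and then $T^*T(Tx)=T(T^*Tx)=\mu_n^2 Tx$), so no appeal to the general spectral theorem for unbounded normal operators is needed.
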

\begin{proof}
Equivalence of (\ref{normalAM}) and (\ref{normalMPinverse}) follows by Theorem \ref{mpicharacterization}. Next assume that (\ref{normalMPinverse}) is true. They by the spectral theorem for compact normal operators, there exists a sequence $(\mu_n)$ of complex numbers and an orthonormal system ${\{\phi_n}\}$ such that
\begin{equation*}
T^{\dagger}y=\sum_{n=1}^{\infty}\mu_n\langle y,\phi_n\rangle \phi_n,\; \text{for all}\; x\in H.
\end{equation*}
From the above equation it is clear that $T^{\dagger}(\phi_n)=\mu_n\phi_n$ for each $n\in \mathbb N$. That is, $\phi_n\in R(T^{\dagger})=C(T)$. It is also clear that $T^{\dagger}(y)\in C(T)$.
Since $T$ is normal, it is easy to observe that $N(T)=N(T^*)=N(T^{\dagger})$. Note that $\phi_n=P_{R(T)}(\phi_n)=TT^{\dagger}(\phi_n)$ for all $n\in \mathbb N$. Hence we have $T\phi_{n}=\lambda_n \phi_n$ for all $n\in \mathbb N$, where $$\lambda_n=\begin{cases}
0\quad  \mbox{if}\; \mu_n=0,\\
\mu_n^{-1} \quad  \mbox{if} \; \lambda_n\neq 0.
\end{cases}$$
Since $T^{\dagger}y\in C(T)$, $TT^{\dagger}y$ is meaningful.  Following the similar steps as in  (\ref{selfadjdiagonal}) of Theorem \ref{selfadjequivalent}, we can determine $D(T)$ easily. On the other hand, following similar arguments about $T$ and $T^{\dagger}$, we can easily prove the implication $(\ref{normaldiagonal}) \Rightarrow (\ref{normalMPinverse})$.

Proof of $(\ref{normaldiagonal})\Leftrightarrow (\ref{normalpurespectrum})$: By $(\ref{normaldiagonal})$ it is clear that $T|_{R(T)}$ is diagonalizable. Observe that if $0\in \sigma(T)$ and as it is an isolated point, it must be an eigenvalue of $T$ and hence by \cite[]{}, $R(T)$ must be closed. So we have $H=R(T)\oplus R(T)^{\bot}=R(T)\oplus N(T)$. As $N(T)$ is finite dimesnsional, we can take an orthonormal basis of it  and adjoin it with an orthonormal basis ${\{\phi_n:n\in \mathbb N}\}$ of $R(T)$ so that  we can obtain an  orthonormal basis of $H$ consisting of eigenvectors of $T$.

On the other hand, since $T$ is not bounded, we can conclude that $\sigma(T)$ is unbounded. It is easy to establish the representation of $T$ and determine the domain easily.

Proof of $(\ref{normalpurespectrum})\Leftrightarrow (\ref{discretespectrum})$: the implication $(\ref{normalpurespectrum})\Rightarrow (\ref{discretespectrum})$ is easy to prove. To prove the other way implication, let us assume that $\sigma(T)=\sigma_d(T)$. Let $\sigma(T)={\{\lambda_1,\lambda_2,\lambda_3,\dots}\}$. Write $G_n=N(T-\lambda_nI)$. It is easy to prove that each $G_n$ reduces $T$. By using the Zorn's Lemma, we can show that $H=\displaystyle \bigoplus_{n=1}^{\infty}G_n$ from which we can conclude that $T$ is diagonalizable.

Proof of $(\ref{discretespectrum})\Leftrightarrow (\ref{cptresolvent})$: Let us assume that $\sigma(T)=\sigma_{d}(T)={\{\lambda_1,\lambda_2,\lambda_3,\dots}\}$. Since $T$ is unbounded, $\sigma(T)$ is unbounded closed subset of $\mathbb C$. Hence $Tx=\displaystyle \sum_{n=1}^{\infty}\lambda_nP_nx$, where $P_n$ is an orthogonal projection onto $N(T-\lambda_nI)$ with $D(T)={\{x\in H:\displaystyle \sum_{n=1}^{\infty}|\lambda_n|^2\|P_nx\|^2<\infty}\}$. Hence  if $\lambda \in \rho(T)$, then
\begin{equation*}
(T-\lambda I)^{-1}=\displaystyle \sum_{n=1}^{\infty}(\lambda_n-\lambda)^{-1}P_n.
\end{equation*}
 It is easy to see that $|(\lambda_n-\lambda)^{-1}|\rightarrow 0$ as $n\rightarrow \infty$. From this it is easy to see that $(T-\lambda I)^{-1}\in \mathcal K(H)$.

Proof of $(\ref{cptresolvent})\Leftrightarrow (\ref{normalimbedding})$: the proof this follows as in the similar lines of the self-adjoint case. We refer to \cite[Theorem 5.12]{schmudgen} for details.
\end{proof}

Next, we turn our attention to invariant and hyperinvariant subspaces. Recall that a closed subspace $M$ of $H$ is said to be invariant under a densely defined closed operator $T$ if $T(D(T)\cap M)\subseteq M$, and hyperinvariant if $M$ is invariant under  every $A\in \mathcal B(H)$ such that
$AT\subseteq TA$.

If $T\in \mathcal{AM}_{c}(H)$ is unbounded and has a bounded inverse, then $T$ has a nontrivial hyperinvariant subspace. Here we remove the invertibility condition and prove the same result.

\begin{proposition}\label{hypinvsubsp}
  If $T\in \mathcal {AM}_{c}(H)$ is densely defined and unbounded, then $T$ has a non trivial hyperinvariant subspace.
\end{proposition}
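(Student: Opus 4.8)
The plan is to avoid any appeal to the spectral theory developed above, and instead to produce, in each of three exhaustive cases, an explicit closed subspace that is \emph{automatically} left invariant by the commutant of $T$, reducing to the already-established invertible case only when no such obvious subspace is available. The starting observation, valid for \emph{every} densely defined closed $T$ and every $A\in\mathcal B(H)$ with $AT\subseteq TA$, is that both $N(T)$ and $\overline{R(T)}$ are invariant under $A$: if $x\in N(T)$ then $Ax\in D(T)$ and $TAx=ATx=0$, so $Ax\in N(T)$; and if $y=Tx\in R(T)$ then $Ay=ATx=TAx\in R(T)$, so $A(\overline{R(T)})\subseteq\overline{R(T)}$ by continuity of $A$. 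Thus each of $N(T)$ and $\overline{R(T)}$ is invariant under every bounded operator commuting with $T$, and it only remains to arrange that one of them is nontrivial, or else to fall back on the invertible case.

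Next I would split into cases according to the kernels of $T$ and $T^{*}$. If $N(T)\neq\{0\}$, then since $N(T)\subseteq D(T)\subsetneq H$ (the domain is proper because $T$ is unbounded and closed, by the closed graph theorem), $N(T)$ is a nontrivial closed subspace, and by the observation above it is hyperinvariant. If $N(T)=\{0\}$ but $N(T^{*})\neq\{0\}$, then $\overline{R(T)}=N(T^{*})^{\perp}$ is proper, and nonzero because $T\neq0$; again it is hyperinvariant. Finally, if $N(T)=N(T^{*})=\{0\}$, I would show that $T$ is in fact bijective with bounded inverse: taking $M=H$ in the definition of $\mathcal{AM}_{c}(H)$, the operator $T$ is minimum attaining, so there is a unit vector $x_{0}\in D(T)$ with $\|Tx_{0}\|=m(T)$; if $m(T)=0$ this would force $x_{0}\in N(T)=\{0\}$, a contradiction, so $m(T)>0$. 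Hence $R(T)$ is closed, and since $N(T^{*})=\{0\}$ we obtain $R(T)=N(T^{*})^{\perp}=H$, so that $T$ is a bijection with $T^{-1}\in\mathcal B(H)$. The conclusion then follows from the previously established invertible case in \cite{SHKGRAM1}.

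The only genuine obstacle is this last case: when both kernels vanish there is no cheap invariant subspace coming directly from $T$, and one must actually invoke the earlier hyperinvariance result for operators with a bounded inverse. The key point that makes this reduction legitimate is that the minimum-attaining property forces $m(T)>0$ as soon as $T$ is injective, which upgrades injectivity to a bounded inverse once the range is seen to exhaust $H$; everything else is the elementary invariance bookkeeping of the first paragraph. I would also note that the finite-dimensionality of $N(T)$ (Corollary \ref{finitedimnullspace}) is not needed for the argument, only its nontriviality, so the proof is uniform and does not use the structure theorem.
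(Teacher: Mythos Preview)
Your proof is correct and follows essentially the same route as the paper: split into the bijective case (where one invokes \cite[Theorem 4.14]{SHKGRAM1}) versus the non-bijective case, and in the latter exhibit $N(T)$ or $R(T)$ (respectively $\overline{R(T)}$) as a nontrivial hyperinvariant subspace. Your version is slightly more self-contained---you verify the hyperinvariance of $N(T)$ and $\overline{R(T)}$ explicitly and derive $m(T)>0$ directly from the minimum-attaining property rather than citing \cite[Proposition 4.2]{SHKGRAM1} for closed range---but the structure and key ideas are identical.
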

\begin{proof}
If $T$ is bijective, then  $T^{-1}\in \mathcal B(H)$. Hence by \cite[Theorem 4.14]{SHKGRAM1}, $T$ has a hyperinvariant subspace. So assume that $T$ is not bijective. Note that $R(T)$ is closed by \cite[Proposition 4.2]{SHKGRAM1}.  If $T$ is one-to-one but not onto, then  $R(T)$ is a non trivial hyperinvariant subspace for $T$. If $T$ is onto but not one-to-one, then $N(T)$ is a hyperinvariant subspace for $T$.

\end{proof}

\begin{corollary}
Assume $T\in \mathcal C(H)$ is densely defined, unbounded. If  $R(T)$ is closed and $T^{\dagger}\in \mathcal K(H)$, then $T$ has a hyperinvariant subspace.
\end{corollary}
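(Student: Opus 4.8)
The plan is to reduce matters to Proposition \ref{hypinvsubsp}, but with care, since the hypotheses here do \emph{not} force $T\in\mathcal{AM}_c(H)$. Indeed, Example \ref{counterexmpcompact} exhibits an operator with closed range and compact Moore--Penrose inverse whose null space is infinite-dimensional, so that $T\notin\mathcal{AM}_c(H)$ by Theorem \ref{mpicharacterization}. Hence a direct one-line appeal to Proposition \ref{hypinvsubsp} is unavailable, and I would split the argument according to whether $T$ is injective. This is precisely the main obstacle: the condition ``$R(T)$ closed and $T^\dagger\in\mathcal{K}(H)$'' is strictly weaker than membership in $\mathcal{AM}_c(H)$, so the non-injective case must be handled by hand.

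First I would record the elementary fact that, for any densely defined closed $T$, both $N(T)$ and $\overline{R(T)}$ are hyperinvariant. If $A\in\mathcal{B}(H)$ satisfies $AT\subseteq TA$ and $x\in N(T)$, then $x\in D(T)$ and $TAx=ATx=0$, so $Ax\in N(T)$; and if $y=Tx\in R(T)$ then $Ay=ATx=TAx\in R(T)$, whence $A\,\overline{R(T)}\subseteq\overline{R(T)}$ by continuity. Since $R(T)$ is assumed closed, $\overline{R(T)}=R(T)$. Because $T$ is unbounded it is nonzero, so $N(T)$ is a proper closed subspace (i.e. $N(T)\neq H$) and $R(T)\neq\{0\}$.

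Now I would split into cases. If $T$ is not injective, then $N(T)$ is a closed subspace with $\{0\}\neq N(T)\neq H$, hence a nontrivial hyperinvariant subspace, and the proof is complete. If instead $T$ is injective, then $N(T)=\{0\}$ is finite-dimensional; combined with the hypothesis $T^\dagger\in\mathcal{K}(H)$, Theorem \ref{mpicharacterization} yields $T\in\mathcal{AM}_c(H)$, and Proposition \ref{hypinvsubsp} supplies the desired nontrivial hyperinvariant subspace. (Alternatively, in the injective case one may observe that either $R(T)\neq H$, so that the proper closed subspace $R(T)$ works directly, or $R(T)=H$ and $T$ is bijective with $T^{-1}=T^\dagger\in\mathcal{K}(H)$, to which the invertible case of Proposition \ref{hypinvsubsp} applies.) Apart from identifying this case split, every step is either the routine verification above or an application of the already-established Theorem \ref{mpicharacterization} and Proposition \ref{hypinvsubsp}.
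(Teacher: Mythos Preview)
Your proposal is correct and follows essentially the same route as the paper's proof: split on whether $N(T)=\{0\}$, use $N(T)$ itself as the nontrivial hyperinvariant subspace when $T$ is not injective, and in the injective case invoke the $\mathcal{AM}_c$ machinery (the paper cites Proposition~\ref{cptgeninvisAM}, you cite the equivalent Theorem~\ref{mpicharacterization}) together with Proposition~\ref{hypinvsubsp}. Your write-up is simply more explicit---verifying by hand that $N(T)$ and $R(T)$ are hyperinvariant and that $N(T)\neq H$ because $T$ is unbounded---and your observation via Example~\ref{counterexmpcompact} that the hypotheses do not force $T\in\mathcal{AM}_c(H)$ is a nice clarification of why the case split is genuinely needed.
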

\begin{proof}
  First note that if $N(T)\neq {\{0}\}$, then clearly $N(T)$ is a hyperinvariant subspace for $T$. So assume that $N(T)={\{0}\}$. By Theorem \ref{cptgeninvisAM}, it follows that $T\in \mathcal{AM}_{c}(H)$. Hence by Theorem \ref{hypinvsubsp}, it is clear that $T$ has a hyperinvariant subspace.
\end{proof}

\subsection*{Acknowledgments}
The second author’s research is supported by SERB grant No. MTR/2019/001307, Govt. Of India.

\bibliographystyle{amsplain}

\end{document}